\numberwithin{equation}{section}
\newcommand{\stab}{G_x}
\newcommand{\abs}[1]{|#1|}
 \def\image{\operatorname{Im}}
\def\ker{\operatorname{Ker}}
\newcommand{\allone}{{\mathbf 1}}
\newcommand{\C}{{\mathbb C}}
\newcommand{\Q}{{\mathbb Q}}
\DeclareMathOperator{\Der}{Der}
\DeclareMathOperator{\Alt}{Alt}
\DeclareMathOperator{\Sym}{Sym}
\DeclareMathOperator{\SL}{SL}
\DeclareMathOperator{\GL}{GL}
\DeclareMathOperator{\ASL}{ASL}
\DeclareMathOperator{\Aut}{Aut}
\DeclareMathOperator{\PSL}{PSL}
\DeclareMathOperator{\agl}{AGL}
\DeclareMathOperator{\Sp}{Sp}
\DeclareMathOperator{\Sz}{Sz}
\DeclareMathOperator{\Ree}{Ree}
\DeclareMathOperator{\PSU}{PSU}
\DeclareMathOperator{\AGL}{AGL}
\DeclareMathOperator{\AGmL}{A\Gamma L}
\DeclareMathOperator{\ind}{ind}
\newtheorem{theorem}{Theorem}[section]
\newtheorem{lemma}[theorem]{Lemma}
\newtheorem{proposition}[theorem]{Proposition}
\newtheorem{question}[theorem]{Question}
\newtheorem{corollary}[theorem]{Corollary}
\newtheoremstyle{plainsl}%
	{4pt} %{\topsep} %topsep was too small, I made it 4pt
	{4pt}
	{\normalfont}  %{\slshape} % only non-default setting
	{}
	{\normalfont\bfseries}
	{.}
	{ }
	{}
\theoremstyle{plainsl}
\newtheorem{example}[theorem]{Example}
\newtheorem{remark}[theorem]{Remark}
\newtheorem{defn}[theorem]{Definition}
\begin{document}

\title[EKR-module property for $2$-transitive groups]{All $2$-transitive groups have the EKR-module property}

\author[Karen Meagher and  Peter Sin]{Karen Meagher${^*}$ and  Peter Sin$^{\dagger}$}
\address{Department of Mathematics and Statistics, University of Regina,
  Regina, Saskatchewan S4S 0A2, Canada}\email{karen.meagher@uregina.ca}
\address{Department of Mathematics, University of Florida, P. O. Box 118105, Gainesville FL 32611, USA}\email{sin@ufl.edu}

\thanks{${^*}$Research supported in part by an NSERC Discovery Research Grant,
    Application No.: RGPIN-03852-2018.}
\thanks{${^\dagger}$Research partially supported by a grant from the Simons
  Foundation (\#633214 to Peter Sin).}
%% Peter's email?

\begin{abstract}
  We prove that every 2-transitive group has a property called the
  {\it EKR-module property}. This property gives a characterization
  of the maximum intersecting sets of permutations in the
  group. Specifically, the characteristic vector of any maximum
  intersecting set in a 2-transitive group is a linear combination
  of the characteristic vectors of the stabilizers of points and
  their cosets. We also consider when the derangement graph of a
  2-transitive group is connected and when a maximum intersecting set
  is a subgroup or a coset of a subgroup.
\end{abstract}

\subjclass[2010]{Primary 05C35; Secondary 05C69}

\keywords{derangement graph, independent sets, Erd\H{o}s-Ko-Rado
  theorem, 2-transitive groups}

\date{today}

\maketitle

\section{Introduction}

The Erd\H{o}s-Ko-Rado (EKR) Theorem~\cite{MR0140419} is a major result
in extremal set theory.  This famous result gives the size and the
structure of the largest collections of pairwise intersecting $k$-subsets of
an $n$-set.  The Erd\H{o}s-Ko-Rado Theorem has been generalized in
many different ways. One generalization is to show that a version of
the theorem holds for different objects. To date, a version of the EKR
Theorem has been shown to hold for the following objects: $k$-subsets
of an $n$-set~\cite{MR1429238, MR0140419, MR0771733}, integer
sequences~\cite{MR657052}, $k$-dimensional subspaces of an
$n$-dimensional vector space over a finite field~\cite{MR867648}, signed
sets~\cite{MR2320597}, partitions~\cite{MR2423345} and perfect
matchings~\cite{MR3646689}, as well as many other objects.

The commonality relating these results is that a largest set of
(pairwise) intersecting objects must be a set of objects that
intersect in a ``canonical'' way.  For example, a largest set of
intersecting $k$-sets is the collection of all $k$-sets that contain a
common point. A largest set of intersecting $k$-subspaces is the set
of all subspaces that contain a common 1-dimensional
subspace. Similarly, a largest set of intersecting perfect matchings
is the collection of all perfect matchings that contain a fixed
pair. In all of these cases, the objects are sets of elements and two
objects are said to intersect if they contain a common element.  And
for all the cases named above, a largest set of intersecting objects
is the collection of all objects that contain a fixed element---these
are the canonical intersecting sets.

In general, whenever we have objects formed from elements we can ask
``what is the size and structure of a largest set of intersecting
objects?''. If a largest intersecting set must be a canonical
intersecting set, then  we say that a version of the EKR Theorem
holds.

In this paper we consider permutations.
Two permutations $g,h \in \Sym(n)$ {\it intersect} if there exists an
$i \in \{1, \dots, n\}$ with $i^g = i^h$. (Here a permutation $g$
is the object, and the elements that form it are the pairs $(i,j)$
where $i^g=j$.)

Let $G$ be a transitive subgroup of $\Sym(n)$.
Clearly the stabilizer in $G$ of a point, or the coset of a stabilizer of a point, is an
intersecting set of permutations. These sets are denoted by
\[
S_{i, j} = \{g \in G \,|\, i^g = j \},
\]
where $i, j \in \{1,\dots, n\}$ and we call them the {\it canonical
  intersecting sets}.

The intersecting sets of largest size in $G$ are called the {\it maximum intersecting sets}.
We say that a group $G$ has the {\it EKR property} if the canonical intersecting sets are maximum intersecting sets.  The
group $G$ is further said to have the {\it strict-EKR property} if
the canonical intersecting sets are the only maximum intersecting sets.
(Note that these properties depend on the group
action.)  Many specific groups have been shown to have either the
strict-EKR property, or the EKR property~\cite{MR3415003, MR3780424,
  KaPa2, MR3923591, MR3921038}. One of the most general results is the
following, which is equivalent to every 2-transitive group having the EKR
property.

\begin{theorem}[Theorem 1.1~\cite{MR3474795}] \label{thm:2transEKR}
  Let $G$ be a finite $2$-transitive permutation group on the set $\{1,\dots,n\}$. The cardinality of a largest intersecting set in $G$ is $|G|/n$.
\end{theorem}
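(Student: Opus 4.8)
\medskip\noindent\textbf{Proof plan.}
The plan is to recast the statement in spectral terms and apply the ratio (Hoffman) bound for cocliques. Form the \emph{derangement graph} $\Gamma_G$: its vertex set is $G$, and $g$ is joined to $h$ exactly when $gh^{-1}$ is a derangement, i.e. fixes no point of $\{1,\dots,n\}$. Then $g$ and $h$ are \emph{non}-adjacent precisely when $i^{g}=i^{h}$ for some $i$, so an intersecting set of permutations is the same thing as a coclique of $\Gamma_G$, and the claim becomes $\alpha(\Gamma_G)=\abs{G}/n$. One inequality is free: since $G$ is transitive, each canonical set $S_{i,j}$ is a coset of a point stabilizer $\stab$, hence has size $\abs{G}/n$, and it is intersecting, so $\alpha(\Gamma_G)\ge\abs{G}/n$. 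Everything is in the matching upper bound.

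\medskip
The set $D$ of derangements of $G$ is closed under inversion and under conjugation, so $\Gamma_G=\mathrm{Cay}(G,D)$ is a normal Cayley graph and its eigenvalues are indexed by $\Irr(G)$, the one attached to $\psi$ being $\eta_\psi=\frac{1}{\psi(1)}\sum_{g\in D}\psi(g)$ (the trivial character giving the valency $\eta_{\mathbf 1}=\abs{D}$). Now invoke $2$-transitivity: the permutation character of $G$ is $\mathbf 1+\chi$ for a single irreducible $\chi$ of degree $n-1$, and for $g\in D$ one has $\chi(g)=\abs{\mathrm{Fix}(g)}-1=-1$, so
\[
\eta_\chi=\frac{1}{n-1}\sum_{g\in D}(-1)=-\frac{\abs{D}}{n-1}.
\]
If this number is the \emph{least} eigenvalue of $\Gamma_G$, then the ratio bound $\alpha(\Gamma_G)\le\abs{G}\cdot\dfrac{-\eta_\chi}{\abs{D}-\eta_\chi}$ evaluates to exactly $\abs{G}/n$, finishing the proof. (The clique--coclique bound would also suffice, but only when $G$ admits a sharply transitive subset, which need not happen, so the spectral route is the robust one.)

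\medskip
Thus the theorem reduces to the single eigenvalue inequality
\[
\frac{1}{\psi(1)}\sum_{g\in D}\psi(g)\ \ge\ -\frac{\abs{D}}{n-1}\qquad\text{for every }\psi\in\Irr(G),
\]
and, as no classification-free argument is known, this is where the classification of finite $2$-transitive groups must be used. I would split by the Burnside dichotomy. In the affine case $G=\bbF_p^{\,d}\rtimes G_0$ with $G_0\le\GL_d(p)$ transitive on $\bbF_p^{\,d}\setminus\{0\}$ (including the finitely many exceptional $G_0$), the set $D$ and the irreducible characters of $G$ are governed by the $G_0$-orbits on $\bbF_p^{\,d}$ and on its dual, and the eigenvalues can be expressed through characters of $G_0$ and bounded directly. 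In the almost simple case one proceeds family by family through the known list --- $A_n$ and $S_n$ in the natural action, $\PSL_d(q)$ on projective space, $\Sp_{2d}(2)$ on the cosets of an orthogonal subgroup, $\PSU_3(q)$, the Suzuki groups $\Sz(q)$, the Ree groups $\Ree(q)$, and the sporadic examples (the Mathieu groups, $\mathrm{HS}$, $\mathrm{Co}_3$) --- using explicit character tables in the sporadic cases and, for the infinite families, bounds on the character ratios $\abs{\psi(g)}/\psi(1)$ together with the known count $\abs{D}$; for $S_n$ and for $\PSL_2(q)$ the needed spectral facts are already available.

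\medskip
The main obstacle is exactly this case analysis. The displayed inequality becomes an equality precisely for $\psi=\chi$, so the estimates on $\sum_{g\in D}\psi(g)$ must be essentially sharp for the irreducible characters of small degree --- those nearest the standard character --- and obtaining such estimates uniformly over all the $2$-transitive families, particularly the groups of Lie type in their natural $2$-transitive actions, is where the real work lies.
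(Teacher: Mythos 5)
Your framework is the right one, and your arithmetic is correct: intersecting sets are cocliques of the normal Cayley graph $\Gamma_G$ on the derangements, the cosets $S_{i,j}$ give the lower bound $\abs{G}/n$, the eigenvalue attached to the degree-$(n-1)$ constituent $\chi$ is $-\abs{D}/(n-1)$, and if that were the least eigenvalue the ratio bound would give exactly $\abs{G}/n$. (Note the paper under review does not prove this theorem; it quotes it from Meagher--Spiga--Tiep, so the only fair comparison is with that source, whose strategy is broadly the one you describe.) The genuine gap is that you have not proved anything beyond the reduction: the entire content of the theorem is the eigenvalue/character estimation over the classified $2$-transitive groups, and your proposal only announces that this ``is where the real work lies'' without carrying out any of it, so as it stands this is a plan, not a proof.

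Two more concrete problems with the plan itself. First, in the affine case you propose to compute spectra via the characters of $G_0$, and you dismiss the clique--coclique bound on the grounds that a sharply transitive subset ``need not happen''; but when $G$ has a regular normal subgroup $N$, that subgroup \emph{is} a sharply transitive subset, i.e.\ a clique of size $n$ in $\Gamma_G$, and the clique--coclique bound finishes the affine half in two lines (this is exactly how the cited paper, and Section~3 of the present paper, handle it). Second, in the almost simple case your reduction to the statement that $-\abs{D}/(n-1)$ is the least eigenvalue of the \emph{unweighted} derangement graph is stronger than what is established or needed: Meagher--Spiga--Tiep work with a weighted version of the ratio bound, assigning weights to conjugacy classes of derangements (together with character bounds, e.g.\ of Guralnick--Tiep for $\Sp_{2m}(2)$), precisely because the unweighted minimum-eigenvalue claim is not available uniformly across the families. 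So even as a roadmap your reduction may dead-end for some families, and without the family-by-family estimates (which constitute essentially all of the cited paper) the theorem is not proved.
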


Clearly any group that has the strict-EKR property has the EKR property. There are 2-transitive permutation groups that do not have the strict-EKR
property, for example $PGL_n(q)$ has the strict-EKR property if and
only if $n=2$~\cite{KaPa, MR3921038}. 

In this paper we consider a property
related to the EKR property, and the strict-EKR property; this
property is called the {\it EKR-module property}. The EKR-module
property was first defined in~\cite{MR3923591}, the definition we give
here is slightly different, but equivalent. 

Before defining the EKR-module property, we need some
notation. The {\it regular module} of $G$ is the (complex) vector space with basis $G$.
We can think of its elements as vectors of length $\abs{G}$.
For any $S \leq G$ define the {\it characteristic vector} of $S$ to
be the vector with entry $1$ in position $g$ if
$g \in S$ and $0$ otherwise; this vector is denoted by $v_S$.
We denote the characteristic vector of $S_{i,j}$ by $v_{i,j}$.

\begin{defn}\label{ekr-module}
A transitive permutation group $G$ has the {\it EKR-module property} if, for any maximum intersecting set of permutations $S$ in $G$,  the characteristic vector
$v_S$ is a linear combination of the vectors $v_{i, j}$ with
$i, j \in \{1,\dots, n\}$. 
\end{defn}

Like the EKR property and strict-EKR property, this is a property of the group action. The main result of
this paper is the following.

\begin{theorem}\label{thm:main}
  Any $2$-transitive group has the EKR-module property.
\end{theorem}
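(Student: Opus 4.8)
The plan is to translate everything to the derangement graph $\Gamma=\mathrm{Cay}(G,D)$, where $D$ is the set of derangements in $G$: the intersecting sets of $G$ are precisely the cocliques of $\Gamma$. Since $D$ is closed under conjugation and inversion, the regular module $\bbC G$, viewed as a $G\times G$-module under two-sided translation, decomposes as $\bigoplus_{\psi\in\Irr(G)}M_\psi$ with $M_\psi\cong V_\psi\boxtimes V_\psi^{*}$ an irreducible $G\times G$-module, and each $M_\psi$ is an eigenspace of the adjacency matrix of $\Gamma$ with eigenvalue $\eta_\psi=\tfrac1{\psi(1)}\sum_{g\in D}\psi(g)$. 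First I would invoke Theorem~\ref{thm:2transEKR}: a maximum intersecting set has size $|G|/n$, which means the ratio (Hoffman) bound for cocliques of $\Gamma$ is tight. Tightness pins down the least eigenvalue as $\tau=-|D|/(n-1)$ and forces
\[
 v_S-\tfrac1n\allone\ \in\ U_\tau:=\bigoplus_{\eta_\psi=\tau}M_\psi
\]
for every maximum intersecting set $S$.

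Next I would identify $M:=\langle v_{i,j}\rangle$. By $2$-transitivity $\bbC\{1,\dots,n\}=\bbC\allone\oplus V_\chi$ with $\chi\in\Irr(G)$ real of degree $n-1$; the assignment $e_i\otimes e_j\mapsto v_{i,j}$ is a $G\times G$-module map with $\sum_j v_{i,j}=\allone$, and comparing $G\times G$-types shows $M=\bbC\allone\oplus M_\chi$, of dimension $1+(n-1)^{2}$. Since each $S_{i,j}$ is itself a maximum intersecting set we get $M_\chi\subseteq U_\tau$, and so the EKR-module property is \emph{equivalent} to: for every maximum intersecting set $S$ one has $E_\psi v_S=0$ for all $\psi\ne\allone,\chi$, where $E_\psi$ is the projection of $\bbC G$ onto $M_\psi$. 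By the displayed containment the only dangerous $\psi$ are those with $\eta_\psi=\tau$ and $\psi\neq\chi$. In particular, whenever $\tau$ is attained only on $M_\chi$ the theorem is immediate — this settles $\Sym(n)$ in the natural action, where it is known.

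To eliminate the remaining ``extra'' $\tau$-modules I would exploit maximum cliques of $\Gamma$. If $\Gamma$ has a clique $C$ of the largest possible size $n$ (a sharply transitive subset of $G$ — available whenever $G$ has a regular subgroup, e.g. for all affine $2$-transitive groups), then the clique/coclique bound $|C|\,|S|=|G|$ is tight, which forces $|gCh\cap S|=1$ for \emph{all} $g,h\in G$. Writing this as $\langle L_gR_h v_C,v_S\rangle=1$ (where $L_g,R_h$ are left and right translation) and projecting onto isotypic components gives $\langle L_gR_h\,E_\psi v_C,\ E_\psi v_S\rangle=0$ for all $g,h$ and all $\psi\ne\allone$; irreducibility of the $G\times G$-module $M_\psi$ then yields $E_\psi v_S=0$ as soon as $E_\psi v_C\neq0$, i.e. as soon as $\sum_{c\in C}\psi(c)\neq0$ as an operator on $V_\psi$. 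A companion device: the ratio-bound computation goes through verbatim with the adjacency matrix replaced by $\sum_g w(g)R_g$ for any real class function $w$ supported on $D$, because $v_S^{\top}\big(\sum_g w(g)R_g\big)v_S=0$ automatically and the eigenvalue on $M_\chi$ is forced to equal $-\big(\sum_g w(g)\big)/(n-1)$; so it is enough to find such a $w$ for which this value is the least eigenvalue of $\sum_g w(g)R_g$ and is attained only on $M_\chi$. Either way, the problem reduces to producing, for each dangerous $\psi$, a suitable sharply transitive set or a suitable weighting.

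Finally I would run through the classification of $2$-transitive groups family by family. For affine-type groups take $C$ to be a regular elementary abelian subgroup and verify the character-sum condition for the dangerous $\psi$, which by Clifford theory over the regular normal subgroup are well understood. For the almost simple type — $\Sym(n)$ and $\Alt(n)$; the groups between $\PSL_d(q)$ and its automorphism group acting on projective points; $\PSU_3(q)$, $\Sz(q)$, $\Ree(q)$; $\Sp_{2m}(2)$ on either of its point sets; and the sporadic actions (the Mathieu groups, $\PSL_2(11)$ on $11$ points, $\Alt(7)$ on $15$ points, $HS$ on $176$, $\mathrm{Co}_3$ on $276$, and so on) — I would either check $U_\tau=M_\chi$ or apply the clique/weighting tools, handling the finitely many small sporadic actions (most notably $\PSL_3(2)$ on $7$ points and $M_{11}$ on $12$ points, where $\tau$ really is attained off $M_\chi$ and no sharply transitive set of size $n$ need exist) by a direct analysis of their maximum cocliques. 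I expect these last almost simple and sporadic cases to be the genuine obstacle: for them there is no purely spectral reason that $E_\psi v_S=0$, and one must marry the algebraic leverage above with rather detailed — possibly computational — information about each group.
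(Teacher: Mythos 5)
Your overall architecture (derangement graph, isotypic decomposition of $\C[G]$, Theorem~\ref{thm:2transEKR} as input) matches the paper, but the two steps that actually carry the proof are missing, and the substitutes you propose do not close them. First, the inference ``$\alpha(\Gamma_G)=|G|/n$, hence the ratio bound is tight, hence the least eigenvalue is $-d_G/(n-1)$ and $v_S-\frac1n\allone$ lies in the least-eigenvalue eigenspace'' is backwards: if the least eigenvalue of $\Gamma_G$ were smaller than $-d_G/(n-1)$, the Hoffman bound would simply be weaker than $|G|/n$ and equality $\alpha=|G|/n$ would force nothing. Identifying the least eigenvalue and, more importantly, \emph{which} irreducible characters attain it, is precisely the hard spectral content; the paper only ever needs (and only verifies, from the literature) this for the simple socles $K$ in Table~\ref{table0}, and needs nothing of the sort in the affine case. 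Second, your main device for the affine case cannot work. Taking $C=N$ a regular normal subgroup, one has $E_\psi v_N\neq 0$ if and only if $V_\psi$ has a nonzero $N$-fixed vector, which, since $N$ is normal, happens if and only if $N\le\ker\psi$. But $\psi_G$ itself has no $N$-fixed vectors (its restriction to $N$ is the regular character minus the trivial one), and the same is a priori true of other characters attaining the minimum; for all such $\psi$ the tight clique--coclique condition is vacuous, i.e.\ it is silent exactly on the modules you must exclude. Likewise, a weighting supported on $\Der_G$ whose minimum eigenvalue is attained \emph{only} on the $\psi_G$-module is not known to exist in general (it is strictly more than what Meagher--Spiga--Tiep construct to prove Theorem~\ref{thm:2transEKR}), so ``produce a suitable weighting for each dangerous $\psi$'' is a program, not a proof. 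The paper's affine argument is completely different and non-spectral: a maximum coclique is a transversal of $N$, every element with a fixed point is $N$-conjugate to a unique element of $\stab$, and a direct computation shows $\sum_{s\in S}\psi_G(sy^{-1})$ takes exactly the two values $|\stab|$ and $-|\stab|/(n-1)$, which is the statement $E_{\psi_G}(v_S-\frac1n\allone)=v_S-\frac1n\allone$.

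In the almost simple case your plan is closer in spirit, but you propose to determine the minimum eigenspace (or find cliques/weightings) for \emph{every} group $G$ in the classification, where no clique of size $n$ need exist and where the needed character-theoretic information is not in the literature. The paper's missing ingredient here is a reduction to the socle: if $K$ is $2$-transitive and $\psi_K$ is the unique character affording $-d_K/(n-1)$ on $\Gamma_K$, then a maximum coclique of $\Gamma_G$ meets each coset of $K$ in a maximum coclique, so $v_S-\frac1n\allone$ lies in the span of the $\phi\,\psi_G$-modules with $\phi$ a constituent of $1_K^G$; a second eigenvalue computation, using Lemma~\ref{lem:Gder} (a proper $2$-transitive subgroup misses some derangements), shows $\lambda_{\phi\psi_G}>-d_G/(n-1)$ for $\phi\neq 1_G$, which forces $v_S-\frac1n\allone$ into the $\psi_G$-module. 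This confines the spectral verification to the simple groups $K$, where it is supplied by Simpson--Frame, Meagher--Spiga, Meagher--Spiga--Tiep, etc.; your proposal leaves the corresponding verification for all overgroups as an unexecuted ``family by family'' check, and your specific claims (e.g.\ that for $\PSL_3(2)$ on $7$ points or $M_{11}$ on $12$ points the minimum is attained outside the $\psi$-module) are unsubstantiated and contrary to the cited sources. Finally, $\Ree(3)\cong\mathrm{P}\Sigma\mathrm{L}_2(8)$ on $28$ points must be handled separately, since its socle is not $2$-transitive there.
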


This result was conjectured in~\cite[Conjecture 1.3]{MR3474795}.
We feel that this is the most general statement for all 2-transitive
groups, in the context of EKR-type results. The theorem also gives
information about the structure of the maximum intersecting sets in a
2-transitive group; this is described in detail in
Section~\ref{sec:consequences}.

Part of the motivation for Definition~\ref{ekr-module} comes from  several papers~\cite{GoMe, MR3780424, KaPa2, MR3921038} which prove that a group has the strict-EKR property by first showing that the group has the EKR property, and then showing the group has the EKR-module property. The final characterization is achieved by showing the only linear combinations of the vectors $\{ v_{i,j}\,|\, i,j \in\{1,\dots,n\} \}$ that give a characteristic vector of an intersecting set have exactly one non-zero coefficient. The EKR-module property is an essential step in the characterization of the maximum intersecting sets.

It is obvious that the strict-EKR property implies the EKR-module property.
It is, however, possible for a transitive group to have the EKR-module property, but not the EKR property. Indeed this occurs if the largest intersecting set of permutations is the union of two or more canonical cocliques. An example is the group $\Alt(4)$ acting on unordered pairs from $\{1,2,3,4\}$. For this transitive group, the set of permutations mapping a pair $A$ to either $A$ or its complement $\overline{A}$ is an intersecting set of maximum size; this set is the union of two canonical cocliques.

We will define a graph with the property that an intersecting set in a group corresponds to a coclique in the graph. This graph has the property that it is connected if and only if the set of derangements generate the entire group. There are many examples of groups where this graph is not connected and, because of this, there can be many different maximum cocliques, and hence non-canonical maximum intersecting sets. In Section~\ref{sec:connected} we consider different cases when this graph is connected.

For 2-transitive groups with a connected derangement graph, all known examples
of non-canonical maximum intersecting sets  are, like canonical ones,
either subgroups or cosets of subgroups.
In Section~\ref{sec:subgroups}, we describe one way in which such non-canonical subgroups 
can arise for a $2$-transitive group with a regular normal subgroup.
These subgroups correspond to  elements of the first cohomology group of a point stabilizer with values in the regular normal subgroup. We consider two examples that illustrate what can happen in this situation.

The main result in this paper is that the characteristic vector of a maximum intersecting set in a 2-transitive group is a linear combination of the characteristic vectors for the canonical sets. In Section~\ref{sec:consequences} we prove that this result gives information about the structure of the set. Using an association scheme on the elements of the group, we can prove that any two maximum intersecting sets have the same {\it inner distribution}.
This is a count of the number of pairs $(g,h)$ of elements in the set that have  $hg^{-1}$ in a given conjugacy class. 

\section{Background}

In this paper we only consider 2-transitive permutation groups, so throughout
this paper $G$ is assumed to be a $2$-transitive group acting faithfully on
a set $X$ of size $n$. For each such
group we let $\chi_G$ denote the permutation character of this
2-transitive action. Since $G$ is 2-transitive, $\chi_G$ is the sum
of the trivial character (denoted $1_G$) and an irreducible character which we will
denote by $\psi_G$.

Let $\C[G]$ be the complex group algebra. The regular module can be identified with the vector space $\C[G]$ and given the structure of a  left $\C[G]$-module by left multiplication. Thus, $\C[G]$ also becomes identified with a subalgebra of the $\abs{G} \times \abs{G}$-matrices.
 
For any irreducible character $\phi$ of $G$, let $E_\phi$ to be the
$\abs{G} \times \abs{G}$-matrix with the $(g,h)$-entry equal to
$\frac{\phi(1)}{|G|} \phi(hg^{-1})$.  Then $E_\phi\in \C[G]$ is the primitive central idempotent
corresponding to $\phi$. We call the image of $E_\phi$ (considered as 
a linear operator on $\C[G]$) the $\phi$-module. It is an ideal of
$\C[G]$ of dimension $\phi(1)^2$. For the trivial representation the central idempotent is
$E_{1_G} = \frac{1}{ \abs{G} } J$, where $J$ is the all ones matrix.
We set $E_{\chi_{G}}=E_{1_G}+E_{\psi_G}$ and define the $\chi_G$-module
to be the image of $E_{\chi_{G}}$, an ideal of dimension $1+(n-1)^2$
in $\C[G]$. This leads to an equivalent definition of the EKR-module property,
from which its name originates.

\begin{lemma}\label{lem:module}
A 2-transitive group $G$ has the EKR-module property if and only if
the characteristic vector of any maximum intersecting set is in the
$\chi_G$-module. Equivalently, $G$ has the EKR-module property if and only if
$E_{\chi_G} v_S =v_S$ for any maximum intersecting set $S$. 
\end{lemma}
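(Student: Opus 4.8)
The plan is to deduce the lemma from a single statement about the regular module: the complex span of $\{\,v_{i,j} : i,j \in X\,\}$ inside $\C[G]$ is precisely the $\chi_G$-module $E_{\chi_G}\C[G]$. Granting this, the lemma is immediate. Indeed, $v_S$ is a linear combination of the $v_{i,j}$ exactly when $v_S \in E_{\chi_G}\C[G]$, which is by definition the assertion that $v_S$ lies in the $\chi_G$-module; and since $E_{\chi_G}$ is an idempotent operator with image $E_{\chi_G}\C[G]$, membership in that image is equivalent to $E_{\chi_G}v_S = v_S$.

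To identify $\mathrm{span}\{v_{i,j}\}$, I would bring in the permutation representation $\rho \colon G \to \GL_n(\C)$ on $X$ and extend it linearly to an algebra homomorphism $\theta \colon \C[G] \to M_n(\C)$, $\theta(\sum_g a_g\, g) = \sum_g a_g\, \rho(g)$. The first point is that for each ordered pair $(i,j)$, the $(i,j)$-entry of $\rho(g)$, as a function of $g \in G$, records whether $g$ sends one fixed point to another, so it is the characteristic vector of a canonical intersecting set; hence the $(i,j)$-entry of $\theta(a)$ is the standard inner product $\langle a, v_{j,i}\rangle = \sum_{g \in S_{j,i}} a_g$. Since $\{v_{i,j}\}$ and $\{v_{j,i}\}$ are the same family, this gives $\ker\theta = (\mathrm{span}\{v_{i,j}\})^{\perp}$, equivalently $\mathrm{span}\{v_{i,j}\} = (\ker\theta)^{\perp}$.

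The second point is to compute $\ker\theta$. Because $G$ is $2$-transitive, the permutation module affords $\chi_G = 1_G + \psi_G$ with each constituent of multiplicity one, so the central idempotent $E_\phi$ acts as zero on it for every irreducible $\phi \notin \{1_G, \psi_G\}$, giving $E_\phi\C[G] \subseteq \ker\theta$ for all such $\phi$. On the other hand $\theta$ restricts to an isomorphism on $E_{\chi_G}\C[G] = (E_{1_G}+E_{\psi_G})\C[G]$: this ideal has dimension $1 + (n-1)^2$, the remaining $E_\phi\C[G]$ die under $\theta$, and $\theta(\C[G])$ is the span of $\{\rho(g)\}$, which equals $\C \oplus M_{n-1}(\C)$ and has dimension $1 + (n-1)^2$ since $\psi_G$ is irreducible of degree $n-1$. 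A short argument with the direct sum $\C[G] = (1-E_{\chi_G})\C[G] \oplus E_{\chi_G}\C[G]$ then forces $\ker\theta = \bigoplus_{\phi \notin \{1_G,\psi_G\}} E_\phi\C[G] = (1 - E_{\chi_G})\C[G]$. Finally, the Wedderburn decomposition $\C[G] = \bigoplus_\phi E_\phi\C[G]$ is orthogonal for the standard inner product — each $E_\phi$ is self-adjoint for the anti-involution $g \mapsto g^{-1}$ of $\C[G]$, which satisfies $\langle ab, c\rangle = \langle b, a^{*}c\rangle$, and $E_\phi E_{\phi'} = 0$ for $\phi \ne \phi'$ — so $(\ker\theta)^{\perp} = E_{\chi_G}\C[G]$, which is the required identification.

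I expect the only delicate step to be bookkeeping rather than anything deep: one must pin down the left/right conventions for $G$ acting on $X$ and on $\C[G]$ so that the matrix entries of $\rho(a)$ genuinely match the inner products $\langle a, v_{i,j}\rangle$, and verify the self-adjointness of the $E_\phi$ that makes the blocks orthogonal. An alternative route that avoids the orthogonality step is to observe first that $\mathrm{span}\{v_{i,j}\}$ is a two-sided ideal of $\C[G]$, since left and right translation only permute the sets $S_{i,j}$, hence a sum of blocks $E_\phi\C[G]$, and then to determine which blocks occur; but pinning down the blocks reduces to essentially the same computation, so I would proceed via $\ker\theta$ as above.
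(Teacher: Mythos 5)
Your argument is correct, but it takes a different route from the paper: the paper disposes of this lemma in two lines by citing Lemmas 4.1 and 4.2 of~\cite{MR3415003}, which establish by explicit character-sum and rank computations that each $v_{i,j}$ lies in the $\chi_G$-module and that the $v_{i,j}$ span it, whereas you prove the identification $\mathrm{span}\{v_{i,j}\}=E_{\chi_G}\C[G]$ from scratch. Your mechanism --- reading the entries of the permutation matrices as the vectors $v_{i,j}$, so that $\mathrm{span}\{v_{i,j}\}=(\Ker\theta)^{\perp}$ for the algebra map $\theta\colon\C[G]\to M_n(\C)$, then pinning down $\Ker\theta=(1-E_{\chi_G})\C[G]$ by the Burnside/Wedderburn description of the enveloping algebra $\C\oplus M_{n-1}(\C)$ together with the mutual orthogonality of the Hermitian idempotents $E_\phi$ --- is sound; the final reduction of the lemma to ``image of an idempotent equals its fixed space'' is immediate. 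The trade-off is that the paper's citation keeps the proof short and matches the computational style of~\cite{MR3415003} (where the coefficient calculation of Lemma 4.1 is reused later, e.g.\ in Section 3 of the present paper), while your structural argument is self-contained and in fact more general: since $\Ker\theta$ is always the sum of the blocks $E_\phi\C[G]$ over non-constituents $\phi$ of the permutation character, the same proof shows for an arbitrary permutation group that $\mathrm{span}\{v_{i,j}\}$ is the sum of the $\phi$-modules over constituents of the permutation character, with $2$-transitivity entering only to say this sum is $E_{1_G}\C[G]\oplus E_{\psi_G}\C[G]$ (your dimension count $1+(n-1)^2$ can even be replaced by the remark that each block $E_\phi\C[G]$ with $\phi$ a constituent is a simple ideal not killed by $\theta$). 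The only care needed is the bookkeeping you already flag: fixing the left/right action conventions so that the matrix-entry functions really are the indicators of the $S_{i,j}$, and checking that each $E_\phi$ is a Hermitian idempotent so the Wedderburn blocks are orthogonal for the standard inner product.
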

\begin{proof}
  This follows from two results from~\cite{MR3415003}. First, Lemma
  4.1 of~\cite{MR3415003} states that if $G$ is 2-transitive, then
  every $v_{i,j}$ is in the $\chi_G$-module. Lemma 4.2 of the same paper
  states that the vectors $v_{i,j}$ are a spanning set for the module.
\end{proof}

We also state a simple corollary of this lemma that gives the
result in a form that can be more convenient.

\begin{corollary}\label{lem:singlemodule}
If a 2-transitive group $G$ has the EKR-module property then for any maximum intersecting set $S$,
\begin{equation}\label{eq:proj}
E_{\psi_G} v_S =v_S - \frac{1}{n}\allone.
\end{equation} 
\end{corollary}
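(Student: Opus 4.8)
The plan is to derive the identity directly from Lemma~\ref{lem:module} together with Theorem~\ref{thm:2transEKR}, so the argument will be a short computation rather than anything structural. Recall that by definition $E_{\chi_G} = E_{1_G} + E_{\psi_G}$, and that the EKR-module property, in the form given by Lemma~\ref{lem:module}, says exactly that $E_{\chi_G} v_S = v_S$ for every maximum intersecting set $S$. Rearranging, $E_{\psi_G} v_S = v_S - E_{1_G} v_S$, so it suffices to identify $E_{1_G} v_S$ with $\frac{1}{n}\allone$.

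For this I would use that $E_{1_G} = \frac{1}{\abs{G}} J$, where $J$ is the all-ones matrix in $\C[G]$. Applying $J$ to any $0$--$1$ vector returns the number of its nonzero entries times $\allone$; applied to $v_S$ this gives $\abs{S}\,\allone$, since by definition $v_S$ has exactly $\abs{S}$ nonzero entries. Hence $E_{1_G} v_S = \frac{\abs{S}}{\abs{G}}\allone$. Now Theorem~\ref{thm:2transEKR} tells us that a maximum intersecting set in a $2$-transitive group has cardinality $\abs{G}/n$, so $\frac{\abs{S}}{\abs{G}} = \frac{1}{n}$ and $E_{1_G} v_S = \frac{1}{n}\allone$, which is what we wanted.

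Combining the two displays gives $E_{\psi_G} v_S = v_S - \frac{1}{n}\allone$, completing the proof. There is no genuine obstacle here: the only inputs are the reformulation of the EKR-module property in Lemma~\ref{lem:module} and the cardinality count of Theorem~\ref{thm:2transEKR}, both already available. The only point worth spelling out is that $E_{1_G}$ acts as averaging against the constant function, which is precisely why the all-ones vector reappears scaled by $\abs{S}/\abs{G}$, and that this scalar collapses to $1/n$ exactly because $S$ is a \emph{maximum} intersecting set.
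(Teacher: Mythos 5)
Your proof is correct and follows essentially the same route as the paper: apply Lemma~\ref{lem:module} to get $E_{\chi_G}v_S=v_S$ and use Theorem~\ref{thm:2transEKR} (via $E_{1_G}=\frac{1}{\abs{G}}J$) to identify $E_{1_G}v_S=\frac{1}{n}\allone$. The only difference is that you spell out the averaging computation that the paper leaves implicit.
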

\begin{proof}
From Theorem~\ref{thm:2transEKR}, if $S$ is a maximum intersecting
set, then $E_{\bf 1}v_s = \frac{1}{n}\allone$ where
$\allone$ denotes the all-ones vector. Then Lemma~\ref{lem:module}
implies the equation. 
\end{proof}

A common approach to EKR theorems is to convert the problem to a
graph problem, and then apply techniques from algebraic graph theory (see~\cite{EKRbook} for details and examples). 
This is the approach that we will
use as well. The {\it derangement graph} of $G$ is the graph with
vertices the elements of $G$, in which two vertices are adjacent if
they are not intersecting.  The set of derangements 
(permutations with no fixed points)  in $G$ is denoted by $\Der_G$, and the
derangement graph of $G$ is denoted by $\Gamma_{G}$. The derangement
graph is the Cayley graph on $G$ with connection set $\Der_G$. A
coclique (or independent set) in $\Gamma_{G}$ is equivalent to a set
of intersecting permutations in $G$. Theorem~\ref{thm:2transEKR} can
be expressed as the size of a maximum coclique in $\Gamma_G$ is
$\frac{\abs{G}}{n}$ for any 2-transitive group $G$.

Using this graph structure allows us to use results from graph
theory. For example the clique-coclique bound (\cite[Corollary 2.1.2]{EKRbook})
easily translates to the following.

\begin{lemma}\label{lem:cliquecoclique}
Let $\omega(\Gamma_G)$ denote the size of the largest clique in
$\Gamma_G$, and $\alpha(\Gamma_G)$, the size of the largest
coclique. Then 
\[
\omega(\Gamma_G)\,\alpha(\Gamma_G) \leq |G|.
\]
Further, if equality holds, then each maximum clique intersects each
maximum coclique in exactly one vertex. \qed
\end{lemma}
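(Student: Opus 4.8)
The plan is to exploit that $\Gamma_G$ is a Cayley graph on $G$, hence vertex-transitive: right multiplication $x \mapsto xg$ permutes the vertex set $G$ regularly, and since adjacency in $\Gamma_G$ means $gh^{-1}\in\Der_G$, a condition preserved by right multiplication, each such map is a graph automorphism. The only other ingredient is the elementary fact that any clique $K$ and any coclique $I$ satisfy $|K\cap I|\le 1$: two distinct vertices of $K$ are adjacent while two distinct vertices of $I$ are not, so $K$ and $I$ cannot share two vertices.

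First I would fix a maximum clique $C$ with $|C|=\omega(\Gamma_G)$ and a maximum coclique $A$ with $|A|=\alpha(\Gamma_G)$, and double-count $\sum_{g\in G}|Cg\cap A|$. On one hand, each translate $Cg$ is again a clique of size $\omega(\Gamma_G)$, so by the observation above every summand is at most $1$ and $\sum_{g\in G}|Cg\cap A|\le |G|$. On the other hand, for a fixed $a\in A$ the condition $a\in Cg$ is equivalent to $g\in C^{-1}a$, so $a$ lies in exactly $|C|=\omega(\Gamma_G)$ of the translates; summing over $a\in A$ gives $\sum_{g\in G}|Cg\cap A|=\alpha(\Gamma_G)\,\omega(\Gamma_G)$. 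Comparing the two expressions yields $\omega(\Gamma_G)\,\alpha(\Gamma_G)\le |G|$.

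For the equality statement, suppose $\omega(\Gamma_G)\,\alpha(\Gamma_G)=|G|$. Then in the double count every summand $|Cg\cap A|$ must attain its maximum value $1$; in particular the term at $g=e$ gives $|C\cap A|=1$. Since $C$ was an arbitrary maximum clique and $A$ an arbitrary maximum coclique, this shows that each maximum clique meets each maximum coclique in exactly one vertex.

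I do not expect a serious obstacle, as this is the standard vertex-transitive clique--coclique bound specialized to $\Gamma_G$ and cited from \cite{EKRbook}. The only points deserving care are checking that right translation genuinely preserves the edge set (so the translates $Cg$ remain cliques) and that the per-vertex count $|\{g : a\in Cg\}|$ is independent of the choice of $a\in A$ --- precisely the place where the regularity of the $G$-action enters.
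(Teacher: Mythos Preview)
Your argument is correct: the double count over right translates of a fixed maximum clique is exactly the standard proof of the clique--coclique bound for vertex-transitive graphs, and your verification that right multiplication preserves the adjacency relation $gh^{-1}\in\Der_G$ is fine. The equality case is also handled correctly, since $C$ and $A$ were chosen as arbitrary maximum clique and coclique before the count was set up.

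As for comparison with the paper: there is nothing to compare against, since the paper does not give a proof of this lemma at all. It is stated as a known bound with a reference to \cite[Corollary~2.1.2]{EKRbook} and closed with a \qedsymbol. Your write-up is precisely the argument one finds behind that citation, so it is consistent with what the authors are invoking.
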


We define a {\it normal Cayley graph} to be a Cayley graph with a connection set that is closed under conjugation. The graph $\Gamma_G$ is a normal Cayley graph since its connection set is the set of derangements in $G$.
The eigenvalues of a normal Cayley graph  can be calculated from the irreducible representations of $G$. The eigenvalue of $\Gamma_{G}$ belonging to the irreducible character $\phi$ of $G$ is
\[
\lambda_\phi = \frac{1}{\phi(1)} \sum_{d \in \Der_G} \phi(d).
\]
This result is usually attributed to Babai~\cite{Ba}, or Diaconis and
Shahshahani~\cite{MR626813}; a proof may be found in~\cite[Section
11.12]{EKRbook}.
The eigenvalue belonging to the trivial character is
clearly $d_G:=\abs{\Der_G} $, and it is not difficult to see that the
eigenvalue belonging to $\psi_G$ is
$-\frac{d_G}{n-1}$. Equation~\eqref{eq:proj} 
implies that if a
2-transitive group $G$ has the EKR-module property, then for any
maximum coclique $S$
\[
A(\Gamma_G) \left( v_S - \frac{1}{n}\allone \right) = -\frac{ d_G }{n-1} \left( v_S - \frac{1}{n}\allone \right)
\]
(where $A(\Gamma_G)$ is the adjacency matrix of $\Gamma_G$).

In his classic book Burnside showed \cite[\S134, Theorem IX]{Bu} that a 2-transitive group has a unique minimal normal subgroup. If this minimal normal subgroup is regular, then it is elementary abelian, and otherwise it is a non-abelian, primitive simple group
(see also \cite[Theorem~$4.1$B]{DM}).
We use this fact to divide the 2-transitive groups into two cases. In the next section we will prove Theorem~\ref{thm:main} for
2-transitive groups in which the minimal normal subgroup is
abelian. Section~\ref{sec:almostsimple} we will prove the result for
the groups in which the minimal normal subgroup is not
abelian; here we will need to use the classification of the almost simple $2$-transitive groups. We will consider when $\Gamma_G$ is connected in Section~\ref{sec:connected}.
Section~\ref{sec:subgroups} considers when the maximum
intersecting sets are groups or cosets of groups. In Section~\ref{sec:consequences} we
show that Theorem~\ref{thm:main} gives information
about the structure of the maximum intersecting sets. Finally we
discuss some questions for further investigation in Section~\ref{sec:conc}.

\section{2-transitive groups with a regular normal subgroup}
\label{sec:regNorm}

In this section we consider 2-transitive permutation groups $(G,X)$,
with $\abs{X}=n$, that have a regular normal subgroup $N$. In this
case, $N$ is an elementary abelian $p$-group for some prime
$p$. Further, $G$ is the semidirect product $N\stab$ where $\stab$ is
the stabilizer of a point $x \in X$. In particular, $\stab$ is a transversal of
$N$ in $G$ and $\stab$ is a coclique in $\Gamma_G$.

\begin{proposition}\label{prop:clique} 
The elements in $N$ form a clique of size $n$ in $\Gamma_G$.
\end{proposition}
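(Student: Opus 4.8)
The plan is to show that any two distinct elements of $N$ are adjacent in $\Gamma_G$, i.e.\ that $n_1 n_2^{-1}$ is a derangement whenever $n_1, n_2 \in N$ are distinct; then $|N| = n$ (since $N$ acts regularly) gives a clique of the claimed size. First I would observe that $N$ is normal in $G$ and acts regularly on $X$, so every non-identity element of $N$ is fixed-point-free: if $m \in N$ fixed some $x \in X$, then by regularity $m$ fixes every point (the stabilizer of a point in a regular action is trivial), forcing $m = 1$. Since $N$ is a subgroup, $n_1 n_2^{-1} \in N$, and it is non-identity precisely when $n_1 \neq n_2$; hence $n_1 n_2^{-1} \in \Der_G$, so $n_1$ and $n_2$ are adjacent in the Cayley graph $\Gamma_G$.

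Thus the vertex set $N \subseteq G$ induces a complete subgraph of $\Gamma_G$, and its size is $|N| = |X| = n$ because $N$ is regular on $X$. This establishes the proposition.

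There is really no obstacle here: the only point requiring care is the standard fact that a normal regular subgroup consists of the identity together with fixed-point-free elements, which is immediate from the fact that a regular action has trivial point stabilizers. Combined with the closure of $N$ under the group operation, this gives the clique directly. (It is worth noting that this clique has exactly the size $n = \omega(\Gamma_G)\,\alpha(\Gamma_G)/|G| \cdot \dots$ — more precisely, together with Theorem~\ref{thm:2transEKR} and Lemma~\ref{lem:cliquecoclique} it shows $N$ is a \emph{maximum} clique and that every maximum coclique meets each coset of $N$ in exactly one element, which is presumably how the proposition will be used in the sequel.)
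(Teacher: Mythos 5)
Your proof is correct and follows essentially the same route as the paper: every non-identity element of the regular normal subgroup $N$ is a derangement (regularity gives trivial point stabilizers), so for distinct $n_1,n_2\in N$ the element $n_1n_2^{-1}$ is a non-identity element of $N$ and hence a derangement, giving a clique of size $|N|=n$. The extra remarks about the clique-coclique bound are accurate but not needed for the proposition itself.
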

\begin{proof}
Since $N$ is regular, it has size $n$ and every non-identity element
is a derangement. For any distinct $n_1, n_2 \in N$, $n_1n_2^{-1}$ is
a non-identity element of $N$, and is a derangement.
\end{proof}

By the clique-coclique bound (Lemma~\ref{lem:cliquecoclique}),
Proposition~\ref{prop:clique} implies that the size of a maximum
coclique in $\Gamma_G$ is bounded by $\frac{|G|}{n}$. Since $\stab$ is
a coclique of this size we have 
$\alpha(\Gamma_G) = \frac{\abs{G}}{n}$. This shows that all of these
groups have the EKR property. Further, any maximum coclique $S$ in
$\Gamma_G$ intersects $N$ (and any coset of $N$) in exactly one
element. So any coclique $S$ of maximum size is a transversal of $N$
in $G$. This can also be seen since for any two distinct elements $s$
and $t$ of $S$, the element $st^{-1}$ has a fixed point so does not
belong to $N$.

The following is a well-known result that we state in this context.

\begin{lemma}\label{lem:conj}
  Let $g=uh$ with $u\in N$ and $h\in \stab$. If $g$ is $G$-conjugate to an
  element of $\stab$, then the following hold:
  \begin{enumerate} [(a)] 
   \item $g=uh$ can be conjugated to $h$ by an element of $N$; and
   \item $h$ is the unique $N$-conjugate of $g$ in $\stab$.
  \end{enumerate}
\end{lemma}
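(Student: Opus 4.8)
The plan is to reduce both parts to one short computation inside the semidirect product $G=N\rtimes\stab$, the key point being that since $N$ is regular we have $N\cap\stab=1$, so every element of $G$ has a \emph{unique} factorisation as (element of $N$)$\cdot$(element of $\stab$). First I would record the effect of an $N$-conjugation on $g=uh$: for $v\in N$, since $N\trianglelefteq G$ we have $hvh^{-1}\in N$, hence
\[
v^{-1}gv \;=\; v^{-1}(uh)v \;=\; \bigl(v^{-1}u\,(hvh^{-1})\bigr)\,h ,
\]
exhibiting $v^{-1}gv$ in factorised form with $N$-part $v^{-1}u\,(hvh^{-1})\in N$ and $\stab$-part $h$. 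By uniqueness of the factorisation, $v^{-1}gv\in\stab$ if and only if this $N$-part is trivial, and in that case $v^{-1}gv=h$. Consequently \emph{any} $N$-conjugate of $g$ lying in $\stab$ must equal $h$: this already gives (b), and reduces (a) to producing a single $v\in N$ with $v^{-1}gv\in\stab$.

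For part (a) I would feed in the hypothesis. By assumption there is $a\in G$ with $a^{-1}ga\in\stab$; writing $a=vh_0$ with $v\in N$ and $h_0\in\stab$ (possible since $G=N\stab$), a direct cancellation gives
\[
v^{-1}gv \;=\; h_0\,(a^{-1}ga)\,h_0^{-1}\;\in\;\stab ,
\]
because $\stab$ is a subgroup containing both $h_0$ and $a^{-1}ga$. The first displayed identity, applied to this particular $v$, now forces $v^{-1}gv=h$, which is exactly (a); combined with (b), $h$ is the unique $N$-conjugate of $g$ in $\stab$.

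I do not expect a genuine obstacle here; the only point needing attention is invoking the uniqueness of the $N\rtimes\stab$ factorisation---equivalently $N\cap\stab=1$, which is precisely the regularity of $N$---so that membership of $\stab$ is detected by vanishing of the $N$-component. (Incidentally, this argument uses neither $2$-transitivity nor commutativity of $N$.) The same phenomenon can be seen geometrically: an element of a transitive group is conjugate into a point stabiliser exactly when it fixes a point of $X$, while $N$-conjugating $g=uh$ multiplies its $N$-component by $v^{-1}\sigma_h(v)$, where $\sigma_h$ denotes conjugation by $h$; thus the $N$-components of the $N$-conjugates of $g$ form a coset of $\{\,v^{-1}\sigma_h(v):v\in N\,\}$, and this coset is forced to contain $1$ as soon as $g$ is $G$-conjugate into $\stab$.
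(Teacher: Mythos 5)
Your proposal is correct and follows essentially the same route as the paper: factor the conjugating element as (element of $N$)(element of $\stab$) to reduce to an $N$-conjugation landing in $\stab$, then use that $\stab$ is a transversal of $N$ (equivalently, uniqueness of the $N\stab$ factorisation) to conclude the conjugate must equal $h$. The only cosmetic difference is that you make the factorised form of $v^{-1}gv$ explicit where the paper argues via images in $G/N$; these are the same observation.
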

\begin{proof}
  By hypothesis there exists $a\in G$ such that $a^{-1}ga\in \stab$. We
  may write $a=mk$, where $m\in N$ and $k\in \stab$. Then
  $k^{-1}m^{-1} (uh) mk\in \stab$, so $m^{-1}(uh) m \in k \stab k^{-1}=\stab$.

  As $\stab$ is a transversal of $N$ in $G$, two elements
  of $\stab$ with the same image in $G/N$ must be equal. Therefore the only possible
  $N$-conjugate of $uh$ in $\stab$ is $h$. So $m^{-1}uhm=h$ and both parts of the lemma are proved. 
\end{proof}

Let $S$ be a maximum coclique in $\Gamma_G$, for any elements $s, t
\in S$ (including $s=t$), write $st^{-1}=uh$ with $u\in N$ and
$h\in \stab$. As $uh$ has a fixed point, it is $G$-conjugate to an element
of $\stab$, hence $N$-conjugate to $h$ by Lemma~\ref{lem:conj}. If we fix $t$
and let $s$ run over $S$, then each element $h\in \stab$ is obtained in
this way exactly once, since $St^{-1}$ is also a transversal of $N$ in
$G$.  These observations will allow us, in the next lemma, to
generalize to arbitrary cocliques a calculation that was made for
canonical cocliques in~\cite[Lemma 4.1]{MR3415003}.

Recall that  $\psi_G$ denotes the irreducible character of $G$ of degree $n-1$
from the 2-transitive action.

\begin{lemma} Let $S$ be a coclique and $y\in G$.
\begin{equation}  \label{eq:coefficients}
  \sum_{s\in S} \psi_G(sy^{-1}) =
 \begin{cases}
    \abs{\stab} & \textrm{if } y\in S,\\
   \frac{-\abs{\stab}}{n-1} & \textrm{if } y\notin S.
\end{cases}
\end{equation}
\end{lemma}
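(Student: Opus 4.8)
The plan is to replace $\psi$ by $\chi_G-1_G$ and reduce the sum to a fixed-point count. Recall that $\chi_G(g)$ is the number of points of $X$ fixed by $g$, so $\psi(g)=\chi_G(g)-1$. Fix $y\in G$. Since $S$ is a maximum coclique it is a transversal of $N$ (as noted above), so the coset $Ny$ meets $S$ in a single element $s_0$; write $s_0=n_0y$ with $n_0\in N$, and note that $n_0=1$ exactly when $y\in S$. The crux of the argument is the identity
\[
\sum_{s\in S}\psi(sy^{-1}) \;=\; \sum_{h\in\stab}\psi(h n_0),
\]
which replaces the left-hand side by a sum involving only $\stab$ and $n_0$.

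To prove this, take $s\in S$. Since $s$ and $s_0$ both lie in $S$ they intersect, so $ss_0^{-1}$ has a fixed point and is therefore $G$-conjugate to an element of $\stab$; by Lemma~\ref{lem:conj}(a) it is conjugate, by some $m_s\in N$, to its $\stab$-part $h_s$, i.e.\ $ss_0^{-1}=m_s h_s m_s^{-1}$. Then $sy^{-1}=(ss_0^{-1})(s_0y^{-1})=(ss_0^{-1})n_0=m_s h_s m_s^{-1}n_0$, and because $\psi$ is a class function and $N$ is abelian,
\[
\psi(sy^{-1})\;=\;\psi\bigl(m_s^{-1}\cdot m_s h_s m_s^{-1}n_0\cdot m_s\bigr)\;=\;\psi\bigl(h_s\cdot m_s^{-1}n_0 m_s\bigr)\;=\;\psi(h_s n_0).
\]
As $s$ runs over $S$, the element $ss_0^{-1}$ runs over the transversal $Ss_0^{-1}$ of $N$, so the $\stab$-parts $h_s$ run over $\stab$ once each; summing over $s$ gives the identity. (When $y\in S$ one has $n_0=1$, recovering $\sum_{s\in S}\psi(sy^{-1})=\sum_{h\in\stab}\psi(h)$.)

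It then remains to evaluate $\sum_{h\in\stab}\psi(h n_0)=\bigl(\sum_{h\in\stab}\chi_G(h n_0)\bigr)-\abs{\stab}$. Using $G=N\rtimes\stab$, identify $X$ with $N$, on which $\stab$ acts $\F_p$-linearly by conjugation; a general element $uh$ (with $u\in N$, $h\in\stab$) then acts on $N$ by $v\mapsto u+h(v)$. Writing $h n_0=(h n_0 h^{-1})h$ and using that $h n_0 h^{-1}\equiv n_0\pmod{\image(I-h)}$, one finds that the number of fixed points of $h n_0$ equals the number of $v\in N$ with $h(v)=v-n_0$, so
\[
\sum_{h\in\stab}\chi_G(h n_0)\;=\;\bigl|\{(h,v)\in\stab\times N : h(v)=v-n_0\}\bigr|.
\]
Count this set by fixing $v$ and using that, by the $2$-transitivity of $G$, the orbits of $\stab$ on $N$ are exactly $\{1\}$ and $N\setminus\{1\}$: the point $v=1$ contributes $\abs{\stab}$ precisely when $n_0=1$, the point $v=n_0\neq 1$ contributes $0$, and each remaining $v$ contributes $\abs{\stab}/(n-1)$. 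This gives $2\abs{\stab}$ if $n_0=1$ and $(n-2)\abs{\stab}/(n-1)$ if $n_0\neq 1$; subtracting $\abs{\stab}$ yields $\abs{\stab}$ when $y\in S$ and $-\abs{\stab}/(n-1)$ when $y\notin S$, which is \eqref{eq:coefficients}.

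The only step that genuinely uses the hypotheses is the reduction identity: it needs $S$ to be a transversal of $N$ (so that $s_0$ exists and the $h_s$ exhaust $\stab$) and it needs $N$ to be abelian (so that $m_s^{-1}n_0 m_s=n_0$); everything afterwards is a fixed-point count, which avoids having to know that $-d_G/(n-1)$ is the least eigenvalue of $\Gamma_G$. I expect the fussiest part to write down carefully to be the identification $X\leftrightarrow N$ and the resulting formula for the number of fixed points of $h n_0$; the concluding orbit count is then routine.
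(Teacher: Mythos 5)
Your proof is correct, and its core coincides with the paper's own argument: the reduction $\sum_{s\in S}\psi(sy^{-1})=\sum_{h\in\stab}\psi(hn_0)$ is obtained exactly as in the paper (the unique element $s_0$ of $S$ in the coset $Ny$, Lemma~\ref{lem:conj}, the class-function property of $\psi$, commutativity of $N$, and the fact that $Ss_0^{-1}$ is a transversal of $N$ so the $\stab$-parts exhaust $\stab$); the paper's normalization $y=mt$ with $t\in S$ is the same as yours with $m=n_0^{-1}$. Where you genuinely diverge is in evaluating the residual coset sum. The paper notes that $\sum_{h\in\stab}\psi(hm^{-1})$ takes the same value on every coset of $\stab$ other than $\stab$ itself (by $2$-transitivity, as in Lemma 4.1 of \cite{MR3415003}) and then uses $\sum_{g\in G}\psi(g)=0$ together with $\sum_{h\in\stab}\psi(h)=\abs{\stab}$ to force the value $-\abs{\stab}/(n-1)$. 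You instead substitute $\psi=\chi_G-1_G$ and compute $\sum_{h\in\stab}\chi_G(hn_0)$ by a double count of pairs $(h,v)$ with $h(v)=v-n_0$ on $X$ identified with $N$, using transitivity of $\stab$ on the nonidentity elements of $N$; your auxiliary observation that the fixed-point count of $hn_0$ equals the number of solutions of $h(v)=v-n_0$ is justified because $h(n_0)-n_0\in\image(I-h)$, and your totals ($2\abs{\stab}$ when $n_0$ is trivial, $(n-2)\abs{\stab}/(n-1)$ otherwise) are right, so both routes agree. The paper's evaluation is shorter and purely character-theoretic; yours is more elementary and self-contained, avoiding the coset-sum-invariance step and the orthogonality identity, at the cost of the bookkeeping with the identification $X\leftrightarrow N$. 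One remark on hypotheses: as with the paper's proof, your argument uses that $S$ is a maximum coclique (equivalently, a transversal of $N$), which the lemma's wording leaves implicit; you state this explicitly, and correctly.
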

\begin{proof}
  First suppose that $y\in S$. Write $sy^{-1}=uh$, where $u\in N$ and $h\in \stab$.
  We know from the previous lemma that $sy^{-1}$ is $G$-conjugate to $h$, and so $\psi_G(sy^{-1})=\psi_G(h)$.
  Moreover, as $s$ runs over $S$ we obtain each $h\in \stab$ once, so
 \begin{equation*}
    \sum_{s\in S} \psi_G(sy^{-1})=\sum_{h\in \stab} \psi_G(h) =\abs{\stab}.
\end{equation*}

Next suppose $y\notin S$. Since $S$ is a transversal of $N$ in $G$, we
can write $y=mt$, with $t\in S$ and $m$ a nonidentity element of
$N$. Suppose $st^{-1}=uh$, where $u\in N$ and $h\in \stab$. By
Lemma~\ref{lem:conj}, there exists $v\in N$ such that
$v(st^{-1})v^{-1}=v(uh)v^{-1}=h$.  Then
 \begin{equation*}
   \psi_G(sy^{-1})=\psi_G(st^{-1}m^{-1})=\psi_G(vst^{-1}m^{-1}v^{-1})
   =\psi_G(vst^{-1}v^{-1}m^{-1})=\psi_G(hm^{-1}).
 \end{equation*}
 Here we used the fact that $N$ is abelian.
 Moreover, the transversal property of $St^{-1}$
 means that, as $s$ runs over $S$, each element of $\stab$ is conjugate to
 $st^{-1}$ for exactly one $s$.
 Hence
 \begin{equation}\label{eq:cosetsum}
   \sum_{s\in S} \psi_G(sy^{-1})=\sum_{h\in \stab} \psi_G(hm^{-1}).
 \end{equation}
 Note that the right-hand side does not depend on $S$. This allows us to
 proceed as in the proof of ~\cite[Lemma 4.1]{MR3415003}.
 The right-hand side of (\eqref{eq:cosetsum})
 is the sum of $\psi_G$ over a coset of $\stab$ that is not equal $\stab$.
 By the $2$-transitivity of $G$ the value of this sum is the same  for all cosets of
 $\stab$ other than $\stab$ itself.
Then, since  $\sum_{g\in G}\psi_G(g)=0$ and $\sum_{g\in\stab}\psi_G(g)=\abs{\stab}$,
it follows that 
  \begin{equation*}
   \sum_{h\in \stab} \psi_G(hm^{-1}) =-\frac{\abs{\stab}}{n-1}.\qedhere
  \end{equation*}
\end{proof}

As in~\cite{MR3415003}, the sum computed in the
Equation~\eqref{eq:coefficients} is the coefficient of $y$ when the
element $\frac{\abs{G}}{\psi_G(1)}E_{\psi_G} v_S\in\C[G]$ is expressed in the
group basis. It follows as in \cite{MR3415003}, that
\begin{equation*}
  E_{\psi_G} \left( v_S-\frac1n\allone \right )=v_S-\frac1n\allone,
\end{equation*}
which shows that $v_S$ lies in the 2-sided ideal of $ \C[G] E_{\psi_G}$ of
$\C[G]$. This shows that $G$ has the EKR-module property, so Theorem~\ref{thm:main}
holds for any 2-transitive group with a regular normal subgroup.

\section{$2$-transitive groups of almost simple type}
\label{sec:almostsimple}

In this section we consider the 2-transitive groups that do not have a
regular abelian normal subgroup $N$; these are the $2$-transitive
groups of almost simple type. In this section, we assume that $G$ is
such a group and $K \trianglelefteq G$ is the minimal nonabelian
normal subgroup of $G$. These groups are listed in
Table~\ref{table0}. With the exception of $G =\Ree(3)$, for each of
these groups the subgroup $K$ is 2-transitive.  The eigenvalues of the
group $\Ree(3)$ can all be directly calculated, and $\psi_{\Ree(3)}$
is the only irreducible character affording the minimal
eigenvalue. Thus $\Ree(3)$ has the EKR-module property. So we will
restrict to the case where $K$ is 2-transitive.

We will show if $K$ has the EKR-module property, then $G$ also has the
EKR-module property. Then we will prove that each of these groups, the
minimal normal subgroup has the EKR-module property.

We assume that $G$ and $K$ are both acting on an $n$-set. We denote
character from this 2-transitive action of $G$ by $\chi_G$, and
$\chi_K$ is the representation of $K$ for its 2-transitive
action. Similarly, we use $\psi_G$ and $\psi_K$ for the irreducible
character of degree $n-1$ that is a component of $\chi_G$ and $\chi_K$.

\begin{lemma}\label{lem:lifting}
  Let $G$ be a 2-transitive group. If $S$ is a maximum coclique in $\Gamma_G$,
  then $v_s -\frac{1}{n}$ is a $\frac{-d_G}{n-1}$-eigenvector of
  $A(\Gamma_G)$.
\end{lemma}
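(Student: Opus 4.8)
The plan is to obtain this from the Delsarte--Hoffman ratio bound. Recall the set-up: $\Gamma_G$ is regular of valency $d_G=\abs{\Der_G}$ on $\abs{G}$ vertices, and, as observed above, $-\tfrac{d_G}{n-1}$ is the eigenvalue of $\Gamma_G$ afforded by $\psi$. The one nontrivial fact I would import is that $\tau:=-\tfrac{d_G}{n-1}$ is actually the \emph{least} eigenvalue of $\Gamma_G$; this is precisely what is established, by a case analysis over the $2$-transitive groups, in the course of proving Theorem~\ref{thm:2transEKR} in~\cite{MR3474795}. Granting this, the ratio bound says that every coclique $C$ of $\Gamma_G$ satisfies $\abs{C}\le\abs{G}\,\tfrac{-\tau}{d_G-\tau}=\abs{G}/n$, and its equality case asserts that if $\abs{C}=\abs{G}/n$ then $v_C-\tfrac{\abs{C}}{\abs{G}}\allone$ lies in the $\tau$-eigenspace of $A(\Gamma_G)$ (see e.g.\ \cite{EKRbook}).

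The argument is then immediate: by Theorem~\ref{thm:2transEKR} a maximum coclique $S$ has $\abs{S}=\abs{G}/n$, so it meets the ratio bound with equality, and hence $v_S-\tfrac1n\allone$ lies in the $\tau$-eigenspace, i.e.\ $A(\Gamma_G)\bigl(v_S-\tfrac1n\allone\bigr)=-\tfrac{d_G}{n-1}\bigl(v_S-\tfrac1n\allone\bigr)$, which is the claim. It is worth emphasising that this is genuinely weaker than the EKR-module property: the latter would say that $v_S-\tfrac1n\allone$ lies in the $\psi$-module $E_\psi\C[G]$ itself, not merely in the possibly larger $\tau$-eigenspace, and supplying that upgrade for all $2$-transitive groups is the business of the rest of the paper.

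If one prefers to avoid quoting the equality case of the ratio bound, the same conclusion falls out of a direct Rayleigh-quotient calculation. Put $w=v_S-\tfrac1n\allone$; since $\abs{S}=\abs{G}/n$ we have $\allone^{\top}w=0$, and using $v_S^{\top}A(\Gamma_G)v_S=0$ (because $S$ is a coclique) together with $A(\Gamma_G)\allone=d_G\allone$ one computes $w^{\top}A(\Gamma_G)w=-d_G\abs{S}^2/\abs{G}$ and $w^{\top}w=\abs{S}-\abs{S}^2/\abs{G}$, so the Rayleigh quotient $w^{\top}A(\Gamma_G)w/(w^{\top}w)$ equals $-d_G/(n-1)=\tau$. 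As $\tau$ is the least eigenvalue, a nonzero vector whose Rayleigh quotient equals $\tau$ must lie in the $\tau$-eigenspace, so $w$ is a $\tau$-eigenvector.

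I expect the only real obstacle to be exactly the ingredient I am borrowing, namely that $-d_G/(n-1)$ is the \emph{least} eigenvalue of $\Gamma_G$ and not just an eigenvalue. The Rayleigh-quotient computation shows unconditionally that $v_S-\tfrac1n\allone$ has Rayleigh quotient $-d_G/(n-1)$; passing from ``Rayleigh quotient equals $\tau$'' to ``is a $\tau$-eigenvector'' is precisely the step that uses the absence of eigenvalues below $\tau$, and that is where all the group theory sits (here, imported from \cite{MR3474795}). The rest is the routine extremal analysis of the ratio bound.
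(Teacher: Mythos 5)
Your proposal is correct, but it reaches the conclusion by a different (and in one respect more transparent) route than the paper. You invoke the equality case of the Delsarte--Hoffman ratio bound (equivalently, your Rayleigh-quotient computation, which is accurate: with $\abs{S}=\abs{G}/n$ one gets $w^{\top}A(\Gamma_G)w/(w^{\top}w)=-d_G/(n-1)$ for $w=v_S-\tfrac1n\allone$), together with the imported fact from \cite{MR3474795} that $-d_G/(n-1)$ is the \emph{least} eigenvalue of $\Gamma_G$. The paper instead works with the quotient matrix of the partition $\{S,\,G\setminus S\}$: its eigenvalues are $d_G$ and $-d_G/(n-1)$, interlacing is claimed to be tight, tightness forces the partition to be equitable (Godsil--Royle), and then a direct count of neighbours shows $v_S-\tfrac1n\allone$ is an eigenvector. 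The two arguments are close cousins, but note that the paper's justification of tightness (``the eigenvalues of the quotient graph are eigenvalues of the graph'') is really shorthand for the same spectral input you cite: with $\mu_1=d_G$ equal to the largest eigenvalue, tight interlacing for a two-part partition needs $\mu_2=-d_G/(n-1)$ to be the smallest (or second largest) eigenvalue, not merely \emph{an} eigenvalue; and indeed, as your Rayleigh computation makes plain, the lemma would be false if $\Gamma_G$ had an eigenvalue below $-d_G/(n-1)$. So your version buys explicitness about where the group theory enters, at the cost of leaning on the minimum-eigenvalue theorem of \cite{MR3474795}; the only point to verify is that this least-eigenvalue statement is available for \emph{all} $2$-transitive groups in the form you quote (the paper only ever applies the lemma to almost simple groups, where the eigenvalue bounds of \cite{MR3474795} certainly apply), whereas the paper's equitable-partition phrasing packages the same dependence less visibly.
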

\begin{proof}
From Theorem~\ref{thm:2transEKR}, the size of $S$ is $\frac{|G|}{n}$.

Since $S$ is a maximum coclique and $\Gamma_G$ is $d_G$-regular, the
number of edges between vertices in $S$ and vertices in
$V ( \Gamma_G )\backslash S$ is $d_G\abs{S}$.  So the quotient graph of $\Gamma_G$ with
the partition $\{ S, V(\Gamma_G)\backslash S\}$ is
\[
\begin{bmatrix}
0 & d_G \\
d_G \left( \frac{|S|}{|G|-|S|} \right) & d_G\left( 1 - \frac{|S|}{|G|-|S|}\right) \\
\end{bmatrix}.
\]
The eigenvalues of this quotient graph are $d_G$ and $-\frac{d_G}{n-1}$.
These eigenvalues interlace the eigenvalues of $\Gamma_G$. Further,
$d_G$ is the eigenvalue of $\Gamma_G$ afforded by the trivial
representation and $ -\frac{d_G}{n-1}$ is the eigenvalue afforded by
$\psi_G$. Since the eigenvalues of the quotient graph are eigenvalues
of the graph, the interlacing is tight. This means that
$\{S, G\backslash S\}$ is an equitable partition~\cite[Lemma
9.6.1]{MR1829620}.  So each vertex in $G\backslash S$ is adjacent to
exactly $d_G\frac{|S|}{|G|-|S|} $ vertices in $S$ and
$ d_G\left( 1- \frac{|S|}{|G|-|S|}\right)$ vertices not in $S$. By direct
calculation of $A(\Gamma_G)(v_S-\frac{1}{n})$,  the vector
$v_S-\frac{1}{n}$ is a $ -\frac{d_G}{n-1}$-eigenvector of $\Gamma_G$.
\end{proof}

\begin{lemma}\label{lem:Gder} 
  Suppose $H$ and $G$ are 2-transitive groups with $H\lneq G$. Then
  there exist derangements in $G$ that are not in $H$.
\end{lemma}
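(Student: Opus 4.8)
The plan is to argue by contradiction. Suppose that every element of $G$ lying outside $H$ has a fixed point on $X$, i.e. that $\Der_G\subseteq H$; I will derive from this that $\abs{G}=\abs{H}$, contradicting $H\lneq G$. The point is that ``no derangements outside $H$'' is a very strong condition: via orbit counting it forces \emph{every} element of $G\setminus H$ to have exactly one fixed point, and once the elements outside $H$ are this constrained, the two-point stabilizer of $G$ is trapped inside $H$, which a $2$-transitivity order count rules out.

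First I would apply the orbit-counting lemma to both groups. Since $G$ and $H$ act transitively on the same set $X$, the (common) permutation character satisfies $\sum_{g\in G}\chi_G(g)=\abs{G}$ and $\sum_{h\in H}\chi_G(h)=\abs{H}$, where $\chi_G(g)$ is the number of points of $X$ fixed by $g$. Subtracting gives $\sum_{g\in G\setminus H}\chi_G(g)=\abs{G}-\abs{H}=\abs{G\setminus H}$. Under the standing assumption each of these $\abs{G\setminus H}$ summands is at least $1$, so in fact each is equal to $1$: every element of $G\setminus H$ fixes exactly one point. Now fix two distinct points $x,y\in X$ (possible since $n\ge 2$). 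Every element of the pointwise stabilizer $G_{x,y}$ fixes at least two points, hence lies in $H$, so $G_{x,y}\subseteq H$ and therefore $G_{x,y}=H_{x,y}$. But $2$-transitivity of $G$ gives $\abs{G_{x,y}}=\abs{G}/(n(n-1))$ and $2$-transitivity of $H$ gives $\abs{H_{x,y}}=\abs{H}/(n(n-1))$, whence $\abs{G}=\abs{H}$ — the desired contradiction. (The tiny case $n=2$ causes no trouble: then $G=H=\Sym(2)$, so there is no proper $2$-transitive subgroup at all.)

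I do not expect a genuine obstacle in this argument. The one step deserving care is the passage from ``at least one fixed point'' to ``exactly one fixed point'', which is just the observation that a sum of $\abs{G\setminus H}$ positive integers equal to $\abs{G\setminus H}$ must be all ones; and the order count for the two-point stabilizers is precisely the place where full $2$-transitivity (rather than mere transitivity) of both $G$ and $H$ is needed, so the hypothesis is used exactly where one would expect.
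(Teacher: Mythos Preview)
Your proof is correct. It shares its opening with the paper's argument: both suppose $\Der_G\subseteq H$, apply orbit counting to $G$ and to $H$, and deduce that every element of $G\setminus H$ has exactly one fixed point, i.e.\ $\psi_G(x)=0$ for all $x\in G\setminus H$. From that point the two proofs diverge. You observe that any element of a two-point stabilizer $G_{x,y}$ fixes at least two points and hence must lie in $H$, giving $G_{x,y}=H_{x,y}$; then the $2$-transitivity of both groups translates this into $\abs{G}=\abs{H}$ via orbit--stabilizer. The paper instead exploits $2$-transitivity through character theory: since $\psi_G$ and its restriction $\psi_H$ are both irreducible, one has $\sum_{g\in G}\psi_G(g)^2=\abs{G}$ and $\sum_{h\in H}\psi_G(h)^2=\abs{H}$, so $\sum_{x\in G\setminus H}\psi_G(x)^2=\abs{G\setminus H}>0$, contradicting $\psi_G\equiv 0$ on $G\setminus H$. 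Your route is more elementary (no characters beyond the permutation character itself) and makes the use of $2$-transitivity very transparent; the paper's route fits naturally with the representation-theoretic framework used elsewhere in the article and generalizes to other situations where one knows a character and its restriction are both irreducible.
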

\begin{proof} 
We have
\begin{equation*}
    \sum_{g\in G}\chi_G(g)=\abs{G}\qquad\text{and}\qquad \sum_{h\in H}\chi_H(h)=\abs{H},
\end{equation*}
so
\begin{equation}\label{eq:notH}
    \sum_{x\in G\setminus H}\chi_G(x)=\abs{G\setminus H}.
\end{equation}

Suppose $\Der_G\subseteq H$. Then $\chi_G(x)\geq 1$ for all
$x\in G\setminus H$ so, by (\eqref{eq:notH}), we must have $\chi_G(x)=1$ and
$\psi_G(x)=0$ for all $x\in G\setminus H$.  

Since $G$ and $H$ both act
$2$-transitively, both $\psi_G$ and its restriction to $H$ are
irreducible characters. We have
  \begin{equation*}
    \sum_{g\in G}\psi_G(g)^2=\abs{G}\qquad\text{and}\qquad\sum_{h\in H}\psi_H(h)^2=\abs{H}.
  \end{equation*}
so
\begin{equation*}
    \sum_{x\in G\setminus H}\psi_G(x)^2=\abs{G\setminus H}.
\end{equation*}
Therefore, there exists $x\in G\setminus H$, with $\psi_G(x)\neq 0$. This contradiction completes the proof.
\end{proof}

\begin{theorem}
  Let $G$ be a 2-transitive group with minimal nonabelian normal
  subgroup $K$.  Assume $K$ is 2-transitive and that $\psi_K$ is the unique character of $K$ affording the
  least eigenvalue $-\frac{d_K}{n-1}$ of $\Gamma_K$. Then for any maximum
  coclique $S$ of $\Gamma_G$, $v_S-\frac{1}{n}\allone$ is in the $\psi_G$-module.
\end{theorem}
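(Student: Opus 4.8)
The plan is to show that $u:=v_S-\tfrac1n\allone$ lies in the $\psi_G$-module, equivalently that its component in every $\phi$-module with $\phi\in\Irr(G)$, $\phi\ne\psi_G$, vanishes. Its trivial component is already zero, since $E_{1_G}v_S=\tfrac{\abs{S}}{\abs{G}}\allone=\tfrac1n\allone$ by Theorem~\ref{thm:2transEKR}. I would combine two eigenvector constraints. First, by Lemma~\ref{lem:lifting} we have $A(\Gamma_G)u=-\tfrac{d_G}{n-1}u$, and since $A(\Gamma_G)$ acts on the $\phi$-module as the scalar $\lambda_\phi$, the vector $u$ already lies in the sum of those $\phi$-modules with $\lambda_\phi=-\tfrac{d_G}{n-1}$. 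This alone will not single out $\psi_G$, so a second constraint coming from $K$ is needed.

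The second constraint exploits the coset structure of $K$ in $G$. For each coset $cK$, the subgraph of $\Gamma_G$ induced on the vertex set $cK$ is isomorphic to $\Gamma_K$ via $ck\mapsto k$, because $\Der_G$ is a normal subset of $G$ and hence $ck_1(ck_2)^{-1}\in\Der_G$ if and only if $k_1k_2^{-1}\in\Der_G\cap K=\Der_K$. Consequently $S_c:=\{k\in K:ck\in S\}$ is a coclique of $\Gamma_K$, so $\abs{S_c}\le\abs{K}/n$; as $\abs{S}=\abs{G}/n=[G:K]\cdot\abs{K}/n$, summing over the cosets forces every $S_c$ to be a \emph{maximum} coclique of $\Gamma_K$. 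The restriction of $u$ to the coset $cK$, read as a vector in $\C[K]$ via this identification, is exactly $v_{S_c}-\tfrac1n\allone$, which by Lemma~\ref{lem:lifting} applied to $K$ is a $-\tfrac{d_K}{n-1}$-eigenvector of $A(\Gamma_K)$; by the hypothesis that $\psi_K$ is the unique irreducible character of $K$ affording the least eigenvalue $-\tfrac{d_K}{n-1}$, that eigenspace is precisely the $\psi_K$-module of $\C[K]$. Now let $A_K$ be the adjacency matrix of the Cayley graph on $G$ with connection set $\Der_K$ (a normal subset of $G$ contained in $K$): then $A_K$ is block-diagonal along the cosets of $K$, each block a copy of $A(\Gamma_K)$, so the previous observation gives $A_Ku=-\tfrac{d_K}{n-1}u$. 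Hence $u$ also lies in the sum of those $\phi$-modules with $\nu_\phi:=\tfrac1{\phi(1)}\sum_{d\in\Der_K}\phi(d)=-\tfrac{d_K}{n-1}$.

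To combine the two constraints I would first pin down the characters $\phi$ with $\nu_\phi=-\tfrac{d_K}{n-1}$. Writing $\phi|_K=\sum_{\rho\in\Irr(K)}m_\rho\rho$ and noting $\sum_\rho m_\rho\rho(1)=\phi(1)$, the quantity $\nu_\phi$ is the convex combination $\sum_\rho\tfrac{m_\rho\rho(1)}{\phi(1)}\lambda_\rho$ of the eigenvalues $\lambda_\rho$ of $\Gamma_K$; since each $\lambda_\rho\ge-\tfrac{d_K}{n-1}$ with equality only for $\rho=\psi_K$, one gets $\nu_\phi=-\tfrac{d_K}{n-1}$ exactly when $\phi|_K$ is $\psi_K$-isotypic. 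As $\psi_K=\psi_G|_K$ extends to $G$, Gallagher's theorem then says these $\phi$ are exactly the characters $\psi_G\beta$ with $\beta\in\Irr(G/K)$. Combining with the first constraint, $u$ lies in the sum of the $(\psi_G\beta)$-modules over those $\beta$ with $\lambda_{\psi_G\beta}=-\tfrac{d_G}{n-1}$. Finally, any derangement $d$ has $\chi_G(d)=0$, hence $\psi_G(d)=-1$ and $(\psi_G\beta)(d)=-\beta(d)$, so the condition $\lambda_{\psi_G\beta}=-\tfrac{d_G}{n-1}$ reduces to $\sum_{d\in\Der_G}\beta(d)=d_G\,\beta(1)$; since $\abs{\beta(d)}\le\beta(1)$ for every $d$, this forces $\beta(d)=\beta(1)$ for all $d\in\Der_G$, i.e.\ $\Der_G\subseteq\ker\beta$. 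The subgroup $\ker\beta$ contains $K$ and is therefore $2$-transitive; if $\beta\ne1_{G/K}$ it is a proper subgroup, so Lemma~\ref{lem:Gder} supplies a derangement of $G$ lying outside it --- a contradiction. Hence only $\beta=1_{G/K}$ contributes, and $u=v_S-\tfrac1n\allone$ lies in the $\psi_G$-module.

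The step I expect to be the main obstacle is the passage from the numerical identity $\nu_\phi=-\tfrac{d_K}{n-1}$ to the explicit family $\phi=\psi_G\beta$: this is where both the uniqueness hypothesis (to run the convex-combination argument) and Clifford theory via Gallagher's theorem (using that $\psi_K$ extends to $\psi_G$) are indispensable. The reduction to the two eigenvector conditions is routine bookkeeping with the block/coset structure, and the final elimination of the nontrivial $\beta$ is a one-line estimate feeding into Lemma~\ref{lem:Gder}.
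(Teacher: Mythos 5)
Your proof is correct, and its skeleton matches the paper's: combine the $-\frac{d_G}{n-1}$-eigenvector condition from Lemma~\ref{lem:lifting} with the coset-block structure of the Cayley graph on $\Der_K$ (the paper's matrix $A$, your $A_K$), identify the relevant eigenspace of that block matrix with the sum of the $\beta\psi_G$-modules, $\beta\in\Irr(G/K)$, and then eliminate $\beta\neq 1_{G/K}$ by evaluating $\lambda_{\beta\psi_G}$ on derangements and invoking Lemma~\ref{lem:Gder}. Where you genuinely differ is the middle identification: the paper checks that each $\beta\psi_G$-module lies in the $-\frac{d_K}{n-1}$-eigenspace of $A$ and then matches dimensions (both sides have dimension $[G:K](n-1)^2$, the uniqueness hypothesis giving the multiplicity of this eigenvalue in $\Gamma_K$), whereas you restrict an arbitrary $\phi\in\Irr(G)$ to $K$, write $\nu_\phi$ as a convex combination of eigenvalues of $\Gamma_K$, and use Gallagher's theorem (via $\psi_G|_K=\psi_K$, which extends to $G$) to conclude that the extremal $\phi$ are exactly the $\psi_G\beta$. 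Both routes use the uniqueness hypothesis in an essential way; the paper's dimension count avoids Clifford theory, while your convexity argument makes clearer why only $\psi_K$-isotypic restrictions can occur and would adapt more readily to situations where a multiplicity count is awkward. One further point in your favour: in the final step the paper only cites the existence of derangements outside $K$ to assert $\phi_i(d)\neq\phi_i(1)$ for some derangement $d$, which as written needs exactly the observation you supply, namely that if $\Der_G\subseteq\ker\beta\supseteq K$ then Lemma~\ref{lem:Gder} applied to the $2$-transitive proper subgroup $\ker\beta$ yields a contradiction; your write-up of that step is the more complete one.
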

\begin{proof}
  Assume that $S$ is any maximum coclique of $\Gamma_G$.  Since $G$ is
  2-transitive, by Theorem~\ref{thm:2transEKR} $G$ has the EKR property, so the size of $S$ is
  $\frac{|G|}{n}$. By Lemma~\ref{lem:lifting}, $v_S-\frac{1}{n}\allone$ is
  a $-\frac{d_G}{n-1}$-eigenvector of $A(G)$.

Since $K$ is a subgroup of $G$, the graph $\Gamma_G$ contains $[G:K]$
copies of $\Gamma_K$ as a subgraph. Let $A$ be the adjacency matrix
for the $[G:K]$ copies of $\Gamma_K$. This is a weighted adjacency
matrix for $\Gamma_G$ where the edge $\{\sigma,\pi\}$ is weighted by
one if $\sigma\pi^{-1}$ is in the intersection of the derangements of
$G$ and $K$ (so $\sigma\pi^{-1}$ is a derangement in $K$), and zero
otherwise. 
The matrix $A$ has the form  $A = I_{[G:K]}\otimes A(\Gamma_K)$. Further, if
$G=\bigcup_{i=1}^{[G:K]} x_iK$ is the decomposition of $G$ into cosets of $K$,
then each $S_i = S \cap x_iK$ is a coclique of size
$\frac{|K|}{n}$ and each $v_{S_i} -\frac{1}{n}\allone$ is a
$-\frac{d_K}{n-1}$-eigenvector for $A$.  This means that
$v_{S} -\frac{1}{n}\allone$ is a $-\frac{d_K}{n-1}$-eigenvector for $A$.
The eigenvalues of $A$ are the same as the eigenvalues of $\Gamma_K$,
but the multiplicities of the eigenvalues for $A$ are equal to the
multiplicities of $\Gamma_K$ multiplied by $[G:K]$. In particular, the
eigenvalue $-\frac{d_K}{n-1}$ has multiplicity $[G:K](n-1)^2$ in $A$.

The induced character $\ind_G(1_K)$ has decomposition
\[
\ind_G(1_K) = \sum_{i} \phi_i(1) \phi_i, 
\]
where the $\phi_i$ are the distinct irreducible characters of $G$ having
$K$ in the kernel (which we may view as characters of $G/K$).
We choose notation so that $\phi_1=1_G$.
Then
\[
\ind_G(\psi_K) = \ind_G(1_K) \psi_G = \sum_{i} \phi_i(1) \phi_i \psi_G.
\]
Each  $\phi_i \psi_G$ is an irreducible character of
$G$ (\cite[Corollary 6.17]{MR2270898}). 
Since the restriction of $\ind_G(\psi_K)$ to $K$ equals
$[G:K]\psi_K$, the eigenvalue of $A$ afforded by each $\phi_i \psi_G$ is
$-\frac{d_K}{n-1}$. The dimension of the  sum of the $\phi_i\psi_G$-modules in $\C[G]$
equals $\sum_i(\phi_i(1)\psi_G(1))^2 =(n-1)^2\sum_i\phi_i(1)^2 =(n-1)^2[G:K]$,
so this sum is the entire $-\frac{d_K}{n-1}$-eigenspace of $A$.
Therefore, $v_{S} -\frac{1}{n}\allone$ lies the sum of the $\phi_i\psi_G$-modules.

Next we will use the fact that $v_{S} -\frac{1}{n}\allone$ is also a
$-\frac{d_G}{n-1}$-eigenvector for the adjacency matrix of $\Gamma_G$
to show that it is entirely contained in the $\phi_1 \psi_G$-module.

Consider
\[
\lambda_{\phi_i \psi_G} 
= \frac{1}{(n-1) \phi_i(1)} \sum_{d \in \Der_G} \phi_i(d) \psi_G(d)
= \frac{-1}{(n-1) \phi_i(1)} \sum_{d \in \Der_G}\phi_i(d). 
\]
By Lemma~\ref{lem:Gder} there are derangements in $G$ that are not
in $K$, so some $d$ we have $\phi_i(d) \neq\phi_i(1)$. So, if $\phi_i \neq 1_G$, then
\[
\frac{1}{\phi_i(1)} \sum_{d \in \Der_G}\phi_i(d) < \frac{1}{\phi_i(1)} \sum_{d \in \Der_G}\phi_i(1) =d_G.
\]
So no $\phi_i \psi_G$ affords $-\frac{d_G}{n-1}$ as an eigenvector, other
than $\phi_i = 1_G$. Since
$v_S -\frac{1}{n}\allone$ is both a $\frac{-d_G}{n-1}$ eigenvector and in
the sum of the $\phi_i \psi_G$-modules, it must in fact be in the $\psi_G$-module.
\end{proof}

The classification of finite simple groups has allowed for the
complete classification the finite $2$-transitive groups. Below is
Table 1 from~\cite{MR3474795} which lists the finite 2-transitive
groups of almost simple type. (This table was extracted from~\cite[page~197]{Ca}.)

\begin{table}[!h]
\begin{tabular}{ccccc}\hline
Line&Group $K$ &Degree&Condition on $G$&  Remarks \\\hline
1&$\Alt(n)$&$n$&$\Alt(n)\leq G\leq \Sym(n)$&$n\geq 5$ \\
2&$\PSL_m(q)$&$\frac{q^m-1}{q-1}$&$\PSL_m(q)\leq G\leq \mathrm{P}\Gamma\mathrm{L}_m(q)$&$m\geq 2$, $(m,q)\neq(2,2),(2,3)$\\
3&$\Sp_{2m}(2)$&$2^{m-1}(2^m-1)$&$G=K$&$m\geq 3$\\
4&$\Sp_{2m}(2)$&$2^{m-1}(2^m+1)$&$G=K$&$m\geq 3$\\
5&$\mathrm{PSU}_3(q)$&$q^3+1$&$\mathrm{PSU}_3(q)\leq G\leq \mathrm{P}\Gamma\mathrm{U}_3(q)$&$q\neq 2$\\
6&$\Sz(q)$&$q^2+1$&$\Sz(q)\leq G\leq \Aut(\Sz(q))$&$q=2^{2m+1}$, $m>0$\\
7&$\Ree(q)$&$q^3+1$&$\Ree(q)\leq G\leq \Aut(\Ree(q))$&$q=3^{2m+1}$, $m> 0$\\
8&$M_{n}$&$n$&$M_n\leq G\leq \Aut(M_n)$&$n\in \{11,12,22,23,24\}$,\\
&&&& $M_n$ Mathieu group,\\
&&&& $G=K$ or $n=22$\\
9&$M_{11}$&$12$&$G=K$&\\
10&$\PSL_2(11)$&$11$&$G=K$&\\
11&$\Alt(7)$&$15$&$G=K$&\\
12&$\PSL_2(8)$&$28$&$G=\mathrm{P}\Sigma\mathrm{L}_2(8) \cong \Ree(3)$&\\
13&$HS$&$176$&$G=K$&$HS$ Higman-Sims group\\
14&$Co_3$&$276$&$G=K$&$Co_3$ third Conway group\\\hline
\end{tabular}
\caption{Finite $2$-transitive groups of almost simple type}\label{table0}
\end{table}

\begin{proposition}\label{prop:Alt}
For $n\geq 5$ the least eigenvalue of $\Gamma_{\Alt(n)}$ is given by $\psi_{\Alt(n)}$ and no
other representations, and the largest eigenvalue is given by the
trivial character and no other. 
\end{proposition}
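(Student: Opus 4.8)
The statement has two parts, which I would handle separately.

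\emph{The largest eigenvalue.} The largest eigenvalue of a connected regular graph is simple, and the number of connected components of a Cayley graph $\mathrm{Cay}(H,C)$ equals $[H:\langle C\rangle]$; so it suffices to show that $\Gamma_{\Alt(n)}=\mathrm{Cay}(\Alt(n),\Der_{\Alt(n)})$ is connected for $n\ge5$. Now $\Der_{\Alt(n)}$ is the set of fixed-point-free even permutations, hence a union of $\Sym(n)$-conjugacy classes, so $\langle\Der_{\Alt(n)}\rangle$ is a normal subgroup of $\Sym(n)$ contained in $\Alt(n)$; it is nontrivial, since it contains an $n$-cycle when $n$ is odd and the product of a transposition with an $(n-2)$-cycle when $n$ is even. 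As the only nontrivial normal subgroup of $\Sym(n)$ contained in $\Alt(n)$ is $\Alt(n)$ itself (for $n\ge5$), we get $\langle\Der_{\Alt(n)}\rangle=\Alt(n)$, so $\Gamma_{\Alt(n)}$ is connected; its valency $d_{\Alt(n)}$ is then a simple eigenvalue, and since it is afforded by the trivial character it is afforded by no other.

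\emph{The smallest eigenvalue.} It is already known that $\psi_{\Alt(n)}$ affords the eigenvalue $-d_{\Alt(n)}/(n-1)$, so the content is that no other irreducible character affords an eigenvalue $\le -d_{\Alt(n)}/(n-1)$. The plan is to recover the spectrum of $\Gamma_{\Alt(n)}$ from that of $\Gamma_{\Sym(n)}$, which is classical. Since $\Gamma_{\Alt(n)}$ is the subgraph of $\Gamma_{\Sym(n)}$ induced on the even permutations and $\Der_{\Alt(n)}$ is a central element of $\C[\Alt(n)]$, a short computation with Clifford theory --- writing $\Der_{\Sym(n)}$ as the disjoint union of its even and odd parts --- shows that for a partition $\mu$ of $n$ with $\mu\neq\mu'$ the eigenvalue afforded by $\chi^{\mu}|_{\Alt(n)}$ is $\tfrac12\bigl(\lambda^{\Sym(n)}_{\mu}+\lambda^{\Sym(n)}_{\mu'}\bigr)$, where $\lambda^{\Sym(n)}_{\mu}$ is the eigenvalue of $\Gamma_{\Sym(n)}$ afforded by $\chi^{\mu}$; and that for self-conjugate $\mu$ the two constituents of $\chi^{\mu}|_{\Alt(n)}$ afford $\lambda^{\Sym(n)}_{\mu}$ plus a correction term supported on the derangement classes whose cycle lengths are all odd and distinct. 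That correction vanishes for $n\le7$ and, by a Cauchy--Schwarz estimate, is small compared with $d_{\Alt(n)}/(n-1)$ for larger $n$, because every self-conjugate irreducible of $\Sym(n)$ has large degree. As a sanity check, $\psi_{\Alt(n)}=\chi^{(n-1,1)}|_{\Alt(n)}$ affords $\tfrac12\bigl(\lambda^{\Sym(n)}_{(n-1,1)}+\lambda^{\Sym(n)}_{(2,1^{n-2})}\bigr)$, and $\lambda^{\Sym(n)}_{(n-1,1)}=-d_{\Sym(n)}/(n-1)$ together with $\chi^{(2,1^{n-2})}=\chi^{(n-1,1)}\otimes\sgn$ forces $\lambda^{\Sym(n)}_{(2,1^{n-2})}=(-1)^{n}$, which does reproduce $-d_{\Alt(n)}/(n-1)$.

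I would then use the results of Renteln and of Ku--Wales on $\Gamma_{\Sym(n)}$: for $n\ge5$, the partition $(n-1,1)$ is the \emph{unique} partition other than $(n)$ attaining the minimal eigenvalue $-d_{\Sym(n)}/(n-1)$, and every remaining eigenvalue of $\Gamma_{\Sym(n)}$ is much smaller in absolute value (of order at most $d_{\Sym(n-2)}$ up to a polynomial factor in $n$). Any pair $\{\mu,\mu'\}$ other than $\{(n),(1^{n})\}$ and $\{(n-1,1),(2,1^{n-2})\}$ therefore avoids $(n-1,1)$ altogether, so both of its members have $\Sym(n)$-eigenvalue well above $-\tfrac12 d_{\Sym(n)}/(n-1)$ and hence their average exceeds $-d_{\Alt(n)}/(n-1)=\tfrac12\bigl(-d_{\Sym(n)}/(n-1)+(-1)^{n}\bigr)$; the self-conjugate characters are disposed of the same way via the degree bound, and the finitely many small $n$ outside the range of the Renteln/Ku--Wales estimates are checked by hand (for $n=5$ the eigenvalues afforded by the five irreducible characters of $\Alt(5)$ are $24,4,4,0,-6$, so only $\psi_{\Alt(5)}$ attains the minimum $-6$). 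This gives the uniqueness.

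The hard part is this last step: one needs the $\Sym(n)$-spectrum estimates in a form sharp enough to beat the threshold $-\tfrac12 d_{\Sym(n)}/(n-1)$, rather than just the weaker bound $-d_{\Sym(n)}/(n-1)$ that uniqueness of the minimum supplies on its own, and one must bound the correction terms arising for self-conjugate partitions once split derangement classes appear ($n\ge8$). If a reference computing the full spectrum of $\Gamma_{\Alt(n)}$ is available it can be quoted instead, leaving only the identification of the least eigenvalue with the one afforded by $\psi_{\Alt(n)}$.
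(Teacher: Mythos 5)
Your route is genuinely different from the paper's, and only part of it is complete. The paper's proof never touches $\Sym(n)$: it lower-bounds the derangement count, $d_{\Alt(n)}\ge n!/6$, feeds this into Lemma 2.4 of \cite{MR3474795}, which bounds the degree of any character affording the least eigenvalue by $(n-1)\bigl(|\Alt(n)|/d_{\Alt(n)}-2\bigr)^{1/2}\le n-1$, and then uses that the only irreducible characters of $\Alt(n)$ of degree at most $n-1$ are the trivial one and $\psi_{\Alt(n)}$; this disposes of the least- and largest-eigenvalue claims simultaneously. Your argument for the largest eigenvalue (the even derangements are a union of $\Sym(n)$-classes, so they generate a nontrivial normal subgroup of $\Sym(n)$ inside $\Alt(n)$, hence all of $\Alt(n)$; connectivity makes $d_{\Alt(n)}$ a simple eigenvalue, afforded only by the trivial character) is correct and a nice alternative. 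Your Clifford-theoretic transfer is also sound: for $\mu\neq\mu'$ the eigenvalue afforded by $\chi^{\mu}|_{\Alt(n)}$ is $\tfrac12\bigl(\lambda_{\mu}+\lambda_{\mu'}\bigr)$. In fact the self-conjugate case is cleaner than you make it: being a derangement depends only on cycle type, so both $\Alt(n)$-classes of a split cycle type lie in $\Der_{\Alt(n)}$, and since the two constituents $\phi^{\pm}$ of $\chi^{\mu}|_{\Alt(n)}$ have their values on these two classes interchanged, the corrections cancel and each $\phi^{\pm}$ affords exactly $\lambda_{\mu}$ --- no correction term for any $n$. (Your reason for its vanishing when $n\le 7$ is wrong: the $n$-cycles for $n=5,7$ are split derangement classes.)

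The genuine gap is the quantitative input you defer to Renteln and Ku--Wales. Uniqueness of the minimizer $(n-1,1)$ for $\Gamma_{\Sym(n)}$ only gives $\lambda_{\mu}>-d_{\Sym(n)}/(n-1)$ for the remaining partitions, and, as you yourself note, averaging two such eigenvalues cannot beat the threshold $-d_{\Alt(n)}/(n-1)=\tfrac12\bigl(-d_{\Sym(n)}/(n-1)+(-1)^{n}\bigr)$. What your plan actually requires is a magnitude bound of the shape $|\lambda_{\mu}|<d_{\Sym(n)}/(2(n-1))$ (with the $(-1)^{n}$ term tracked) for every $\mu$ with $\mu_1\le n-2$ and $\mu'_1\le n-2$, valid for all $n\ge 5$ or with the finitely many exceptions checked by hand. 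You neither state such a bound precisely nor verify that the literature supplies it in this form, and the entire burden of the proposition sits exactly there; as written this is a plan rather than a proof. Bounds of roughly this strength do appear in the Ku--Wales analysis, so the approach is likely salvageable, but it would remain considerably heavier than the paper's argument, whose only inputs are the derangement-count estimate and the list of low-degree characters of $\Alt(n)$.
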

\begin{proof}
The number of derangements in $\Alt(n)$ is known~\cite[Sequence
A003221]{OEIS}, and for $n\geq 5$ we have 
\begin{align*}
d_{\Alt(n)} & = \frac{n!}{2} \sum_{i=0}^{n-2}  (-1)^i \frac{1}{i!} +
(-1)^{n-1}(n-1) \\
                   &\geq \frac{n!}{2} \left(1 - 1+\frac{1}{2} - \frac{1}{6} \right) = \frac{n!}{6}.
\end{align*}
The inequality clearly holds for $n$ odd. If $n$ is even and at least 6, then the inequality follows since 
$\frac{n!}{2(4!)} - \frac{n!}{2(5!)} - (n-1)$ is positive.

Using Lemma 2.4 from~\cite{MR3474795}, if the character $\phi \neq \psi_{\Alt}$ of the
alternating group affords the minimum eigenvalue of
$\Gamma_{\Alt(n)}$, then
\[
\phi(1) \leq (n-1) \left( \frac{|\Alt(n)|}{d_{\Alt(n)}} - 2 \right)^{\frac12}.
\]
Since
\[
(n-1)\left( \frac{|\Alt(n)|}{d_{\Alt(n)}} - 2 \right)^{\frac12}  \leq
(n-1)\left( \frac{n!}{2} \left(\frac{n!}{6}\right)^{-1} - 2 \right)^{\frac12} = n-1,
\]
any character giving the minimal eigenvalue must have dimension no
more than $n-1$. Since the only representations with degree no more
than $n-1$ are the trivial representation and $\psi_{\Alt}$, it follows 
that $\psi_{\Alt}$ is the unique irreducible representation affording
the minimum eigenvalue. Note that this also implies that only the
trivial representation gives the largest eigenvalue.
\end{proof}

\begin{theorem}
 The group $K$ of each type in Table~\ref{table0}  has $\chi_K$
  as the only irreducible character that gives the eigenvalue $-\frac{d_K}{n-1}$.
  (Here, as usual, we exclude $K=PSL(2,8)$ in row 12, as it is not $2$-transitive.)
\end{theorem}
\begin{proof}
  The previous result shows this holds for $\Alt(n)$ with $n \geq 5$. For $\PSL_2(q)$,
  this fact can be read off the tables in Simpson and
  Frame~\cite{MR0335618}, for $\PSL_3(q)$ it is in~\cite[Table 5]{KaPa2},
  and for $\PSL_m(q)$ with $m\geq 4$ it is stated in~\cite[Proposition
  8.3]{MR3474795}. For the groups in lines 3 and 4, $Sp_{2m}(2)$ this
  result is from~\cite[Proposition 9.1]{MR3474795} for $m\geq 7$. For
  $\PSU_3(q)$ this is from \cite[Table 5 and Table 6]{MR3923591}. For
  $Sz(q)$ the result is given in~\cite[Proposition 4.1]{MR3474795} and for $Ree(q)$
  this is \cite[Proposition 5.1]{MR3474795}. The eigenvalues of the
  Mathieu groups are given in \cite[Lemma 5.1]{MR3415003}. For all the
  other finite groups all the eigenvalues can be calculated from the
  character table, and only $\chi_K$ gives the eigenvalue
  $-\frac{d_K}{n-1}$.
\end{proof}

We have now shown that all 2-transitive groups of almost simple
type have the EKR-module property. In \S3 the EKR-module property was established
for 2-transitive groups with a regular normal subgroup. Thus, the proof of Theorem~\ref{thm:main} is complete.

\begin{remark}
  We have proven that the characteristic vector of every maximum coclique  in
  the derangement graph for a 2-transitive group is a linear combination of the characteristic vectors $v_{i, j}$ of the canonical cocliques.  We can say a little more  about the coefficients in this linear combination.   If $\Q[G] \subset \C[G]$ is the rational group algebra and $V$ is the intersection of $\Q[G]$ with the $\chi$-module, then $V$ is a $\Q[G]$-module whose dimension over $\Q$ equals the complex dimension of the  $\chi$-module. Since $E_\chi\in\Q[G]$, it follows that $E_\chi(\Q[G])= V$. The characteristic vector $v_S$ of any maximum coclique lies in $\Q[G]$ and  by the  EKR-module property we have $v_S=E_\chi(v_S)\in V$. Moreover,  the canonical  characteristic vectors $v_{i, j}$ span $V$ over $\Q$. Therefore, our proof actually shows that $v_S$ is a {\it rational} linear combination of the $v_{i, j}$.
\end{remark}

\section{Connected derangement graphs}
\label{sec:connected}

Consider the example of a 2-transitive Frobenius group $G$ with Frobenius kernel
$N$ and Frobenius complement $H$. (The group $\agl(1,q)$ where $q$ is a prime power is an example of such a group.) In this case, the cosets of $N$ are
cliques in the derangement graph of $G$. In fact, the derangement
graph is exactly the disjoint union of these cliques. Since any
transversal of $N$ is a coclique, as long as $\abs{H}>2$, there are
non-canonical cocliques of the form $H\setminus\{h\}\cup\{hu\}$, where
$h\in H$ and $u\in N$ are nonidentity elements.  The 2-transitive Frobenius groups with $\abs{H}>2$ are a family of groups that do not satisfy the strict-EKR
property. Further the non-canonical independent sets just described
are neither subgroups, nor cosets of subgroups. 

In this section we will consider other groups that have a
disconnected derangement graph; this occurs exactly when the derangements
do not generate the group.

\begin{lemma} Suppose $G$ contains a proper $2$-transitive subgroup $H$.
  Then $G$ is generated by $H\cup \Der_G$. In particular,
  if $H$ is generated by $\Der_H$, then $G$ is generated by $\Der_G$.
  \end{lemma}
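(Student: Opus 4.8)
The plan is to set $M=\langle H\cup\Der_G\rangle$ and prove $M=G$ by contradiction. Since $H\le M$ and $H$ acts $2$-transitively on the set $X$, the subgroup $M$ also acts $2$-transitively on $X$. Suppose $M\neq G$; then $M\lneq G$ with both $M$ and $G$ being $2$-transitive permutation groups on $X$, so Lemma~\ref{lem:Gder} applies to the pair $M\lneq G$ and produces a derangement $d\in\Der_G$ with $d\notin M$. This contradicts $\Der_G\subseteq M$, which holds by the very definition of $M$. Hence $M=G$, i.e.\ $G$ is generated by $H\cup\Der_G$.

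For the final assertion I would first record the elementary observation that $\Der_H\subseteq\Der_G$: since $H\le G$ and the two groups act on the same set $X$, any $h\in H$ having no fixed point on $X$ is still a derangement when regarded as an element of $G$. Consequently, if $H=\langle\Der_H\rangle$ then $H\le\langle\Der_G\rangle$, and combining this with the first part gives $G=\langle H\cup\Der_G\rangle=\langle\Der_G\rangle$; that is, $G$ is generated by its derangements (equivalently, by the discussion at the start of this section, $\Gamma_G$ is connected).

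There is essentially no obstacle here once Lemma~\ref{lem:Gder} is available; the argument is a one-line application of it. The only point that deserves a moment's care is the claim that $M$ is genuinely $2$-transitive \emph{on $X$} — not merely an abstract group containing a copy of a $2$-transitive group — but this is immediate, since $2$-transitivity is a property of the permutation action and $M\supseteq H$ acts on $X$. A secondary routine check is that derangements are preserved under the inclusion $H\le G$, which is clear because both act on the same $X$.
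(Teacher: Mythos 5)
Your proof is correct and follows essentially the same route as the paper: both set $M=\langle H\cup\Der_G\rangle$, note it is a $2$-transitive subgroup containing all derangements, and apply Lemma~\ref{lem:Gder} to the pair $M\lneq G$ to reach a contradiction. Your explicit remarks that $M$ is $2$-transitive on $X$ and that $\Der_H\subseteq\Der_G$ simply spell out details the paper leaves implicit.
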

\begin{proof} 
  Suppose for a contradiction that the subgroup $M$ of $G$ generated
  by $H\cup\Der_G$ is proper. Then we may apply Lemma~\ref{lem:Gder}
  to the group $G$ and the subgroup $M$, to obtain a derangement
  outside $M$. This is a contradiction and hence $M=G$. The last
  statement of the lemma follows immediately.
\end{proof}

For all the groups $G$ in Table~\ref{table0}, with the exception of
$\Ree(3)$, this corollary applies.  Proposition~\ref{prop:Alt} implies
that the derangement graph for the Alternating group is connected.
The fact that the minimal groups $K$ in lines 2-7 of Table~\ref{table0}
have a connected derangement graph can be read from~\cite{MR3474795}
(with results from~\cite{GT} for lines 3 and 4).  The groups in lines
8-11 and 13-14 are finite, and the eigenvalues of the derangement
graphs for the minimal group can be directly calculated and
individually checked.  With these facts, we have the following
corollary.

\begin{corollary}
  With the exception of $\Ree(3)$ (isomorphic to
  $\mathrm{P}\Sigma\mathrm{L}_2(8)$ with its action on 28 points), the
  derangement graph for any 2-transitive group of almost-simple type is
  connected.
\end{corollary}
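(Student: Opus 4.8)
The plan is to deduce this corollary directly from the preceding lemma together with the small amount of case‑checking that has already been done in the excerpt. The key observation is that every group $G$ in Table~\ref{table0} contains as a normal subgroup the listed minimal nonabelian group $K$, and (with the sole exception of $\Ree(3)$, where $K=\PSL_2(8)$ acts imprimitively on $28$ points) the subgroup $K$ is itself $2$-transitive on the same $n$ points. So in each case either $G=K$ and there is nothing to reduce, or $K\lneq G$ is a proper $2$-transitive subgroup, and the lemma immediately gives: if $\Gamma_K$ is connected (equivalently, $K$ is generated by $\Der_K$), then $G$ is generated by $\Der_G$, i.e. $\Gamma_G$ is connected. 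Thus the whole corollary reduces to the single statement that $\Gamma_K$ is connected for each minimal group $K$ in lines $1$–$11$ and $13$–$14$ of Table~\ref{table0}.

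First I would dispose of the alternating groups: Proposition~\ref{prop:Alt} shows that for $\Alt(n)$ with $n\ge 5$ the trivial character is the \emph{unique} character affording the largest eigenvalue $d_{\Alt(n)}$ of $\Gamma_{\Alt(n)}$; since a Cayley graph is connected precisely when the valency is a simple eigenvalue (the multiplicity of the top eigenvalue counts the connected components of a regular graph), $\Gamma_{\Alt(n)}$ is connected. Next, for the minimal groups in lines $2$–$7$ — the linear groups $\PSL_m(q)$, the symplectic groups $\Sp_{2m}(2)$ in both actions, and the families $\PSU_3(q)$, $\Sz(q)$, $\Ree(q)$ — I would invoke the eigenvalue analyses already cited in the excerpt for the EKR‑module property: in each of these the references~\cite{MR3474795} (together with~\cite{GT} for the symplectic lines $3$ and $4$) establish that the valency $d_K$ is a simple eigenvalue of $\Gamma_K$, which is exactly connectedness. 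Finally, for the remaining sporadic entries — the Mathieu groups $M_{11},M_{12},M_{22},M_{23},M_{24}$ and the exceptional small cases $M_{11}$ on $12$ points, $\PSL_2(11)$ on $11$ points, $\Alt(7)$ on $15$ points, $HS$ on $176$ points, and $Co_3$ on $276$ points — all of these are finite groups of manageable size, so one computes the eigenvalues of $\Gamma_K$ directly from the character table via $\lambda_\phi=\frac{1}{\phi(1)}\sum_{d\in\Der_K}\phi(d)$ and checks in each case that only the trivial character yields $d_K$; hence $\Gamma_K$ is connected.

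Having verified that $\Gamma_K$ is connected for every minimal group $K$ of almost‑simple type except $K=\PSL_2(8)$ in its imprimitive $28$‑point action, I would conclude as follows. If $G$ is any $2$‑transitive group of almost‑simple type other than $\Ree(3)$, then its socle $K$ is $2$‑transitive on the same $n$ points and $\Gamma_K$ is connected by the above; if $G=K$ we are done, and otherwise $K$ is a proper $2$‑transitive subgroup of $G$, so the lemma gives that $G$ is generated by $\Der_G$, i.e. $\Gamma_G$ is connected. This proves the corollary.

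The main obstacle is not any single hard argument but the bookkeeping: one must be confident that the socle $K$ of each $G$ in Table~\ref{table0} really does act $2$‑transitively on the \emph{same} set (so that the lemma applies), and one must correctly assemble the connectedness of $\Gamma_K$ from the several scattered sources — Proposition~\ref{prop:Alt} for $\Alt(n)$, the cited eigenvalue computations for the infinite families (including the use of~\cite{GT} for $\Sp_{2m}(2)$), and direct character‑table computations for the finitely many sporadic cases — while being careful to single out $\Ree(3)\cong\mathrm{P}\Sigma\mathrm{L}_2(8)$ on $28$ points as the genuine exception, where the Frobenius‑type structure forces $\Gamma_{\Ree(3)}$ to be disconnected.
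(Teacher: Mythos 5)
Your argument is correct and is essentially the paper's own: the lemma reduces the corollary to connectedness of $\Gamma_K$ for the minimal $2$-transitive subgroups $K$ in Table~\ref{table0}, which is supplied by Proposition~\ref{prop:Alt} for $\Alt(n)$, by the cited results of \cite{MR3474795} (with \cite{GT} for lines 3 and 4) for lines 2--7, and by direct character-table checks for the remaining finite cases. One correction to your side remarks on the excluded case (which the statement itself does not oblige you to analyze): $\PSL_2(8)$ acts primitively, though not $2$-transitively, on the $28$ points, and $\Ree(3)\cong\mathrm{P}\Sigma\mathrm{L}_2(8)$ is not a Frobenius-type configuration --- the paper's own proof of the exception is simply that every element of $\mathrm{P}\Sigma\mathrm{L}_2(8)$ outside $\PSL_2(8)$ has order $3$, $6$ or $9$ and has a fixed point, so all derangements lie in the proper subgroup $\PSL_2(8)$ and $\Gamma_{\Ree(3)}$ is disconnected.
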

\begin{proof}
  $\PSL_2(8)$ is a subgroup with index 3 in
  $\mathrm{P}\Sigma\mathrm{L}_2(8)$. Every element in
  $\mathrm{P}\Sigma\mathrm{L}_2(8)$ that is not in $\PSL_2(8)$ has
  order 3, 6 or 9 and $\psi_{\mathrm{P}\Sigma\mathrm{L}_2(8) }$
  vanishes on these points.  So all derangement of
  $\mathrm{P}\Sigma\mathrm{L}_2(8)$ are in $\PSL_2(8)$.
\end{proof}

Next we focus on the 2-transitive groups $G$ with a regular normal
subgroup $N$. We begin with an immediate consequence of
the fact that $\Der_G$ is a union of conjugacy classes.
\begin{lemma}
  Let $G$ be a $2$-transitive finite permutation group, with a regular
  normal subgroup $N$.  If $G/N\cong \stab$ is a simple group and
  there are derangements outside $N$, then the derangement graph of
  $G$ is connected.\qed
\end{lemma}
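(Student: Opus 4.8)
The plan is to prove the equivalent statement that $\Der_G$ generates $G$, since for a Cayley graph $\mathrm{Cay}(G,\Der_G)$ with inverse-closed connection set the graph is connected if and only if the connection set generates the group (and $\Der_G$ is inverse-closed because the inverse of a fixed-point-free permutation is again fixed-point-free). So set $M := \langle \Der_G\rangle \leq G$; the goal is to show $M = G$.

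First I would record the two features of $M$ that do all the work. On the one hand, $\Der_G$ is a union of conjugacy classes of $G$ (having no fixed point is invariant under conjugation in $G$), so $M$ is a \emph{normal} subgroup of $G$. On the other hand, since $N$ is regular, every non-identity element of $N$ is fixed-point-free, hence a derangement; thus $N \setminus \{1\} \subseteq \Der_G$ and therefore $N \leq M$. Consequently $M/N$ is a normal subgroup of $G/N \cong \stab$.

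Now I would invoke the hypothesis that $\stab$ is simple: $M/N$ is either trivial or all of $G/N$. If $M/N = G/N$ then $M = G$ and $\Gamma_G$ is connected, as desired. If instead $M/N$ is trivial, then $M = N$, which forces $\Der_G \subseteq N$; but this contradicts the standing hypothesis that there are derangements outside $N$. Hence the first case must occur and the proof is complete. There is essentially no obstacle here: the content is entirely the observation that $\langle \Der_G\rangle$ is a normal subgroup squeezed between $N$ and $G$, after which simplicity of $G/N$ leaves only the two listed options, and the "derangements outside $N$" hypothesis kills the degenerate one.
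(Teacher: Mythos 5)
Your proof is correct and is exactly the argument the paper has in mind: the lemma is stated there as an immediate consequence of $\Der_G$ being a union of conjugacy classes (so $\langle\Der_G\rangle$ is normal, contains $N$, and simplicity of $G/N\cong\stab$ plus the existence of a derangement outside $N$ forces $\langle\Der_G\rangle=G$). You have simply written out the details the paper leaves to the reader.
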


Fix an element $x$, from the set on which $G$ acts, and
let $H=\stab$ be its stabilizer. Then, by definition of the regular normal
subgroup, there is a map $N \rightarrow X$ defined by $u \mapsto u(x)$
that is an isomorphism of $N$ sets where $N$ acts on itself by left
multiplication. This is also an isomorphism of $H$-sets where $H$
acts on $N$ by conjugation. That is to say, for all $h\in H$ and
$u \in N$ we have $h(u(x)) = (huh^{-1})(x)$.

Under this identification of $N$ with $X$, the action of $G$ on $X$ is
equivalent to an action of $G$ on $N$ given as follows. Each element
of $G$ has the unique form $mh$ for $m \in N$ and $h \in H$. Then
$m h(u) = m(huh^{-1})$ for all $u\in N$.  We will make use of this $G$-action on $N$ in
the following lemmas.

\begin{lemma}\label{fh}
Let $G$ be a $2$-transitive finite permutation group
with a regular normal subgroup $N$ and point stabilizer $H$. Then
for $h\in H$, the coset $Nh$ contains a derangement if and only if $h$
centralizes a nonidentity element of $N$.
\end{lemma}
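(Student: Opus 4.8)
The plan is to work entirely with the $G$-action on $N$ described just above, where each $g \in G$ is written uniquely as $mh$ with $m \in N$, $h \in H$, and acts by $mh(u) = m(huh^{-1})$ for $u \in N$. The element $mh$ is a derangement precisely when the equation $m(huh^{-1}) = u$ has no solution $u \in N$, i.e.\ when $m \neq u(huh^{-1})^{-1}$ for every $u$, equivalently when $m$ is \emph{not} of the form $u \cdot h u^{-1} h^{-1}$ for any $u \in N$. So I would first restate the problem as: the coset $Nh$ contains a derangement if and only if the ``twisted commutator'' map $\tau_h \colon N \to N$, $\tau_h(u) = u \cdot (huh^{-1})^{-1}$, fails to be surjective.

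Now $N$ is a finite (elementary abelian) group, so a map of sets from $N$ to $N$ is surjective if and only if it is injective. Since $N$ is abelian, $\tau_h(u) = u\,(huh^{-1})^{-1}$ is precisely the group homomorphism $u \mapsto u - h u h^{-1}$ (writing $N$ additively), namely $\mathrm{id} - c_h$ where $c_h$ is the conjugation automorphism of $N$ induced by $h$. The key step is then the elementary observation: a group endomorphism of a finite group is surjective iff it is injective iff its kernel is trivial. Hence $Nh$ contains a derangement $\iff$ $\mathrm{id} - c_h$ is not injective $\iff$ $\ker(\mathrm{id} - c_h) \neq \{1\}$ $\iff$ there is a nonidentity $u \in N$ with $h u h^{-1} = u$, i.e.\ $h$ centralizes a nonidentity element of $N$. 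That is exactly the claim.

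Concretely the steps are: (1) use the $G$-action on $N$ to characterize when $mh$ is a derangement, obtaining that $Nh$ meets $\Der_G$ iff $\tau_h$ is non-surjective; (2) observe $N$ is abelian so $\tau_h = \mathrm{id} - c_h$ is an endomorphism of $N$; (3) invoke the finite-set pigeonhole fact that an endomorphism of a finite group is surjective iff injective iff has trivial kernel; (4) identify $\ker(\mathrm{id}-c_h)$ with $C_N(h)$ and read off the equivalence. I do not expect a real obstacle here — the only thing to be careful about is step (1), making sure the bookkeeping $mh(u)=m(huh^{-1})$ is used correctly so that ``$mh$ has a fixed point'' translates to ``$m \in \tau_h(N)$'', and that as $m$ ranges over $N$ the coset $Nh$ meets $\Der_G$ exactly when $\tau_h$ misses some value of $N$. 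One could alternatively phrase step (3) via counting: $|\tau_h(N)| = |N|/|\ker(\mathrm{id}-c_h)|$, so $\tau_h$ is onto iff $|\ker(\mathrm{id}-c_h)| = 1$.
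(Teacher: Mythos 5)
Your proposal is correct and follows essentially the same route as the paper: the paper's proof also introduces the map $f_h(u)=huh^{-1}u^{-1}$ (the inverse of your $\tau_h$, as $N$ is abelian), translates ``$Nh$ contains a derangement'' into non-surjectivity of this map via the same fixed-point bookkeeping for the action $mh(u)=m(huh^{-1})$, and uses injective-iff-surjective on the finite set $N$ together with the identification of the kernel with $C_N(h)$. Your phrasing of $\tau_h$ as the endomorphism $\mathrm{id}-c_h$ is only a mild repackaging of the same argument.
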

\begin{proof}
Consider the map $f_h:N \to N$ defined by 
\[
f_h(u)=huh^{-1}u^{-1}.
\] 
Then $h$ centralizes a nonidentity element of $N$ if and only if $f_h$
is not injective, which in turn is equivalent to $f_h$ not being
surjective. 

Suppose $f_h$ is not surjective, and let $m \in N$ be an element not
in the image of $f_h$. We claim that $m^{-1}h$ is a derangement. Here
we use the identification of $X$ with $N$ described above. Supposed
$m^{-1}h$ is not a derangement, then is has a fixed point. So
\begin{align}
u =(m^{-1}h)(u) = m^{-1}huh^{-1}
\end{align}
and it follows that $f_h(u) = m$, a contradiction. Thus if $f_h$ is
not surjective then $Nh$ contains a derangement.

Conversely, if $f_h$ is surjective, then for every $m\in N$, there
exists $u\in N$ such that $f_h(u) = m^{-1}$. This equation can be
written as $mhuh^{-1}=u$, that is $(mh)(u)=u$. Thus every element of
$Nh$ has a fixed point.
\end{proof}

\begin{theorem}
  Let $G$ be a $2$-transitive finite permutation group with a regular
  normal subgroup $N$ and point stabilizer $H=G_x$. Then the subgroup
  of $G$ generated by $\Der_G$ is equal to the subgroup generated by
  $N$ and the two-point stabilizer $H_y$, for $y\neq x$.
\end{theorem}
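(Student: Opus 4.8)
The plan is to prove the two subgroups coincide by mutual containment, using Lemma~\ref{fh} to control which cosets $Nh$ contain a derangement. Write $D$ for the subgroup of $G$ generated by $\Der_G$ and $M$ for the subgroup generated by $N$ together with all two-point stabilizers $H_y$ with $y \neq x$.

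First I would show $D \subseteq M$. A derangement of $G$ lies in some coset $Nh$ with $h \in H$; we may write it as $m^{-1}h$ for a suitable $m \in N$ (using the $N \rtimes H$ form of $G$). Since $N \subseteq M$, it suffices to show that whenever $Nh$ contains a derangement, $h \in M$. By Lemma~\ref{fh}, $Nh$ contains a derangement exactly when $h$ centralizes some nonidentity $u \in N$. Now invoke the identification of $X$ with $N$ in which $H$ acts by conjugation: $h$ centralizing $u \in N \setminus \{1\}$ means precisely that $h$ fixes the point $y = u(x) \neq x$ of $X$, i.e. $h \in H_y$. Hence $h \in M$, so every derangement lies in $M$ and therefore $D \subseteq M$.

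Next I would show $M \subseteq D$. The subgroup $N$ is contained in $D$: every nonidentity element of $N$ is a derangement by regularity (as already noted in the excerpt, e.g. in the proof of Proposition~\ref{prop:clique}). It remains to show each two-point stabilizer $H_y$, $y \neq x$, is contained in $D$. Write $y = u(x)$ for the unique $u \in N \setminus \{1\}$ corresponding to $y$. An element of $H$ fixes $y$ iff it centralizes $u$; so $H_y$ is the centralizer $C_H(u)$. For each $h \in H_y$, Lemma~\ref{fh} tells us $Nh$ contains a derangement $m^{-1}h$ for some $m \in N$; then $h = m \cdot (m^{-1}h) \in N \cdot D \subseteq D$. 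Thus $H_y \subseteq D$ for every $y \neq x$, and since $D$ already contains $N$, we conclude $M \subseteq D$.

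The step I expect to require the most care is the translation between the group-theoretic condition ``$h$ centralizes a nonidentity element of $N$'' and the permutation-theoretic condition ``$h$ fixes a second point of $X$,'' and in particular keeping straight that under the identification $N \cong X$ the $H$-action is by conjugation (so that the orbit-stabilizer correspondence sends $C_H(u)$ to the stabilizer of $u(x)$). Everything else is bookkeeping with the semidirect product structure $G = N \rtimes H$ and the observation that $N \leq D$, both of which are already set up in the preceding paragraphs of the section.
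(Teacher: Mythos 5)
Your proposal is correct and follows essentially the same route as the paper: both rest on Lemma~\ref{fh} together with the identification of the conjugation action of $H$ on $N$ with its action on $X$ (so that centralizing a nonidentity $u\in N$ is the same as lying in $H_y$ for $y=u(x)\neq x$), plus the fact that $N\setminus\{1\}\subseteq \Der_G$. The only difference is presentational: you argue by two explicit inclusions, while the paper phrases the same argument as the derangement-generated subgroup being generated by exactly those cosets $Nh$ with $h$ centralizing a nonidentity element of $N$.
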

\begin{proof} Let $M$ be the subgroup of $G$ generated by $\Der_G$.
  Then $N\subseteq M$. By Lemma~\ref{fh}, a coset $Nh$, with $h\in H$
  contains a derangement if and only if $h$ centralizes a nonidentity element of $N$.
  In this case, the whole coset $Nh$ will be contained in $M$ since
  $N$ is contained in $M$. Thus, $M$ is equal to the subgroup generated by  those
  cosets $Nh$ for which $h$ centralizes a nonidentity element of $N$.

  As the conjugation action of $H$ on $N$ is isomorphic to the permutation action of $H$
  on $X$, an element $h$ centralizes a nonidentity element of $N$ if and only
  if $h$ lies in $H_y$ for some $y\in X$, $y\neq x$. This completes the proof.
\end{proof}
  
\begin{proposition}\label{prop:CharofFrobenius}
  Let $G$ be a $2$-transitive finite permutation group, with a regular
  normal subgroup $N$. Then $G$ is a Frobenius group if and only if
  $\Der_G=N\setminus\{1\}$.
\end{proposition}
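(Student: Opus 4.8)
The plan is to pass, via Lemma~\ref{fh} and the $H$-equivariant identification of $N$ with $X$ set up just before it, to a statement about two-point stabilizers, and then to recognise that statement as the definition of a Frobenius group. Write $H=\stab$, so $G=N\rtimes H$. Since $N\setminus\{1\}\subseteq\Der_G$ always, the equality $\Der_G=N\setminus\{1\}$ is equivalent to the assertion that no coset $Nh$ with $h\in H\setminus\{1\}$ contains a derangement. By Lemma~\ref{fh} this holds if and only if no nonidentity $h\in H$ centralizes a nonidentity element of $N$; and under the identification of $N$ with $X$ (on which the conjugation action of $H$ is the permutation action of $H=\stab$) this says precisely that no nonidentity $h\in H$ fixes a point $y\ne x$. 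Thus
\[
\Der_G=N\setminus\{1\}\iff H_y=\{1\}\ \text{ for every }\ y\in X\setminus\{x\}.
\]

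It then suffices to show that, for our $G$ (with $n\ge 3$), the right-hand condition is equivalent to $G$ being a Frobenius group. The forward implication is immediate: $H\ne 1$ (since $|H|\ge n-1$) and $H\ne G$ (since $N\ne 1$), and for $g\notin H$ we have $H^g=G_{x^g}$ with $x^g\ne x$, so $H\cap H^g=H_{x^g}=\{1\}$, which exhibits $G$ as a Frobenius group with complement $H$ and kernel $N$. For the converse I would first record two easy facts: $H$ acts transitively, hence irreducibly, on $N\setminus\{1\}$, so $N$ is a \emph{minimal} normal subgroup of $G$; and any $h\in H$ centralizing all of $N$ fixes every point of $X$, so $C_G(N)=N$. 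Now suppose $G$ is a Frobenius group with kernel $K$. Normal subgroups of a Frobenius group are comparable with the kernel, so $N\le K$ or $K\le N$, and in either case $N=K$: if $K\le N$, then for any $1\ne k\in K\le N$ abelianness of $N$ gives $N\le C_G(k)\le K$, so $N=K$; and if $N\le K$, then $N$ is a nontrivial normal subgroup of the nilpotent group $K$, so $N\cap Z(K)\ne\{1\}$, and this is a normal subgroup of $G$ contained in the minimal normal subgroup $N$, whence $N\le Z(K)$ and $K\le C_G(N)=N$. Therefore $H$ is a complement to the Frobenius kernel $K=N$, hence a Frobenius complement, so $H_y=H\cap G_y=\{1\}$ for all $y\ne x$, as required.

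I expect the converse step to be the only real difficulty: the hypothesis ``$G$ is a Frobenius group'' is a priori a statement about \emph{some} subgroup of $G$, and the point is that it pins down $N$ as the Frobenius kernel and hence $H=\stab$ as a Frobenius complement; this is where $C_G(N)=N$, the minimality of $N$, and the basic structure theory of Frobenius groups (nilpotency of the kernel, comparability of normal subgroups with the kernel, centralizers of kernel elements lying in the kernel) all get used. A minor caveat worth a sentence in the write-up: for $n=2$ the statement fails (there $G=\Sym(2)$ is not a Frobenius group, yet $\Der_G=N\setminus\{1\}$), so one wants $n\ge 3$, which should be assumed throughout.
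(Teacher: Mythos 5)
Your proof is correct, and its first half---using Lemma~\ref{fh} and the identification of $N$ with $X$ to reduce $\Der_G=N\setminus\{1\}$ to the statement that no nonidentity element of $\stab$ fixes a second point---is exactly the paper's argument. Where you genuinely diverge is in what ``Frobenius group'' is taken to mean. The paper implicitly reads it as a property of the given permutation action (no nonidentity element fixes two points, and point stabilizers are nontrivial), so after the reduction both implications are immediate: ``not Frobenius'' literally means some nonidentity $h\in\stab$ centralizes a nonidentity element of $N$, and Lemma~\ref{fh} then produces a derangement outside $N$, while the Frobenius direction is a short count. You instead treat ``Frobenius group'' abstractly, as the existence of some Frobenius kernel $K$, and must then identify $K$ with $N$; for this you use minimality of $N$, $C_G(N)=N$, comparability of normal subgroups with the kernel, nilpotency of the kernel (Thompson), and the fact that centralizers of nonidentity kernel elements lie in the kernel. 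This buys a stronger statement---the equivalence holds for the abstract notion of Frobenius group, independent of which action witnesses it---at the cost of deep structure theory the paper never needs. Two small remarks: the last step ``a complement to the kernel is a Frobenius complement'' is most cleanly justified either by conjugacy of complements (Schur--Zassenhaus, the kernel being solvable) or, more directly, by noting that a nonidentity $h\in H_y$ centralizes some nonidentity $u\in N=K$, so $h\in C_G(u)\le K=N$, contradicting $H\cap N=1$; and your caveat about $n=2$ (where $G=\Sym(2)$ is regular, hence not Frobenius, yet $\Der_G=N\setminus\{1\}$) is a genuine degenerate exception that the paper's statement and proof silently pass over.
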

\begin{proof} 

  If $G$ is a Frobenius group then it is immediate that
  $\Der_G=N\setminus\{1\}$. 

  Suppose that $G$ is not a Frobenius group. Then there is a
  nonidentity element $h\in H$ that centralizes a nonidentity element
  of $N$. Then by Lemma~\ref{fh}, the coset $Nh$ contains a
  derangement.
\end{proof}

\begin{corollary}
Let $G$ be a $2$-transitive finite permutation group, with a regular
  normal subgroup $N$. Then $G$ is a Frobenius group if and only if
  $\Gamma_G$ is the union of disjoint complete graphs.
\end{corollary}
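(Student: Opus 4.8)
The plan is to deduce this corollary from the preceding results, which essentially do all the work. The key observation is that $\Gamma_G$ is a union of disjoint complete graphs if and only if the derangements, together with the identity, form a subgroup of $G$ — because a Cayley graph $\mathrm{Cay}(G,C)$ is a disjoint union of cliques precisely when $C\cup\{1\}$ is a subgroup (the connected components are then the cosets of that subgroup, each inducing a complete graph since $C\cup\{1\}$ is closed under the relevant operations). I would first state and justify this elementary fact about Cayley graphs, noting that $C=\Der_G$ is inverse-closed, so the condition ``$C\cup\{1\}$ is a subgroup'' is equivalent to ``$C\cup\{1\}$ is closed under multiplication.''

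Next I would invoke Proposition~\ref{prop:CharofFrobenius}: for a $2$-transitive group $G$ with regular normal subgroup $N$, being Frobenius is equivalent to $\Der_G = N\setminus\{1\}$. If $G$ is Frobenius, then $\Der_G\cup\{1\} = N$ is a subgroup, so by the Cayley-graph fact $\Gamma_G$ is a disjoint union of complete graphs (indeed the components are the cosets of $N$, as already remarked in Section~\ref{sec:connected}). Conversely, if $\Gamma_G$ is a union of disjoint complete graphs, then $\Der_G\cup\{1\}$ is a subgroup $L$ of $G$; since every non-identity element of $N$ is a derangement (by Proposition~\ref{prop:clique}, as $N$ is regular), we have $N\subseteq L$. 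It remains to show $L=N$, i.e.\ that there are no derangements outside $N$; but Proposition~\ref{prop:CharofFrobenius} tells us that if $G$ is not Frobenius there is some $h\in H\setminus\{1\}$ with a derangement in the coset $Nh$, contradicting $L=N$. Hence $G$ is Frobenius.

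I would present this as a short two-paragraph proof: one sentence recalling the Cayley-graph characterization of disjoint-union-of-cliques, then the two implications via Proposition~\ref{prop:CharofFrobenius} and Proposition~\ref{prop:clique}. The only mild subtlety — the ``main obstacle,'' such as it is — is making the Cayley-graph fact precise and self-contained: one must check both that a subgroup gives disjoint cliques (left cosets, each complete because the connection set is inverse-closed and $1\notin\Der_G$ but $1\in L$) and conversely that a disjoint-clique structure forces $\Der_G\cup\{1\}$ to be closed under multiplication (if $d_1,d_2,d_1d_2$ were not all in one clique/component the component structure would fail). Everything else is a direct citation of what has already been established.
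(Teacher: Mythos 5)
Your forward implication, and the general fact that a Cayley graph $\mathrm{Cay}(G,C)$ with $C$ inverse-closed and $1\notin C$ is a disjoint union of cliques exactly when $C\cup\{1\}$ is a subgroup, are both fine (the paper simply cites a reference for the forward direction, so proving it via the cosets of $N$ is a harmless variation). The problem is the converse, where your argument is circular. You correctly reduce to showing $L:=\Der_G\cup\{1\}=N$, but your justification is: if $G$ is not Frobenius then, by Proposition~\ref{prop:CharofFrobenius}, there is a derangement outside $N$, ``contradicting $L=N$.'' Since $L=N$ is exactly the statement you are trying to prove, no contradiction has been reached; the step amounts to ``if $G$ is not Frobenius then $G$ is not Frobenius.'' Nowhere do you extract from the disjoint-clique hypothesis an upper bound on $\Der_G$ (or on $L$), and that bound is the whole content of this implication.

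There are two short ways to close the gap. The paper's route: the point stabilizer is a coclique of size $\abs{G}/n$, so by the clique--coclique bound (Lemma~\ref{lem:cliquecoclique}) no clique---in particular no connected component---has more than $n$ vertices; hence the identity has at most $n-1$ neighbours, i.e.\ $\abs{\Der_G}\le n-1$, and since $N\setminus\{1\}\subseteq\Der_G$ (Proposition~\ref{prop:clique}) this forces $\Der_G=N\setminus\{1\}$, after which Proposition~\ref{prop:CharofFrobenius} finishes. Alternatively, your subgroup $L$ can be used directly: every nonidentity element of $L$ is fixed-point-free, so $L$ acts semiregularly on the $n$ points and $\abs{L}$ divides $n$; since $N\le L$ and $\abs{N}=n$, this gives $L=N$. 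Either completion is a couple of lines, but as written your proof of the harder direction does not go through.
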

\begin{proof}
  It is not hard to see that if $G$ is a Frobenius group, then
  $\Gamma_G$ is the union of complete graphs on $n$ vertices,
  see~\cite[Theorem 3.6]{MR3286446} for details.
  If $\Gamma_G$ is the union of disjoint complete graphs then, since a point stabilizer
  is a coclique of size $\abs{G}/n$, no complete subgraph has more than $n$ vertices.
  In particular, the identity element can have no more than $n-1$ neighbors.
  However the set of neighbors of the identity element is $\Der_G$, which
  contains $N \setminus\{1\}$, a set of size $n-1$. Thus,  $\Der_G= N \setminus\{1\}$, and by   Proposition~\ref{prop:CharofFrobenius} $G$ is a Frobenius group.
\end{proof}

There are many 2-transitive groups with a regular normal subgroup that
are not Frobenius groups and have disconnected derangement graphs. For
example, as we shall see, the groups  $\AGmL_1(p^e)$, for $p>2$ and $e\ge 2$,
are 2-transitive groups with a disconnected derangement graphs, 
and further examples may be found among their subgroups.
Each of these groups have the EKR-property, the EKR-module
property, but not the strict-EKR property. Further, for each of these
groups there are maximum cocliques that are neither subgroups, nor
cosets of subgroups.

\begin{proposition}
If $p>2$ is prime and $e\ge 2$ then $\AGmL_1(p^e)$ is a
2-transitive group with a disconnected derangement graph.
\end{proposition}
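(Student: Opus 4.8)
The plan is to apply Proposition~\ref{prop:CharofFrobenius} (or, more directly, Lemma~\ref{fh}) to the group $G = \AGmL_1(p^e)$. Recall that $\AGmL_1(p^e)$ consists of all maps $x \mapsto a x^{\sigma} + b$ on $\F_{p^e}$, where $a \in \F_{p^e}^\times$, $b \in \F_{p^e}$, and $\sigma \in \mathrm{Gal}(\F_{p^e}/\F_p)$. This is the semidirect product $N \rtimes H$ with regular normal subgroup $N = (\F_{p^e}, +)$ (the translations) and point stabilizer $H = \GmL_1(p^e) = \F_{p^e}^\times \rtimes \mathrm{Gal}(\F_{p^e}/\F_p)$. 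By the corollary to Lemma~\ref{fh} (the theorem identifying the subgroup generated by $\Der_G$), the derangement graph $\Gamma_G$ is connected if and only if the two-point stabilizers $H_y$ together with $N$ generate $G$; by Proposition~\ref{prop:CharofFrobenius}, since we want to exhibit a disconnected graph, it suffices instead to show $G$ is \emph{not} Frobenius but that the derangements nonetheless fail to generate — actually the cleanest route is just to exhibit an explicit nontrivial $h \in H$ whose centralizer in $N$ is the whole of $N$, i.e. $h$ acts trivially on $N$, giving a coset $Nh$ of pure fixed-point-having elements, and then argue a counting/generation obstruction.

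More precisely, here is the route I would take. \textbf{Step 1:} Identify the element $h_0 \in H$ corresponding to the Frobenius automorphism composed with nothing else, i.e. $h_0 : x \mapsto x^p$. Its fixed points on $N = \F_{p^e}$ (under the conjugation/natural action) form the subfield $\F_p$, which is nontrivial since $p > 2$. So $h_0$ centralizes the nonidentity subgroup $\F_p \le N$, meaning (by Lemma~\ref{fh}) the coset $N h_0$ contains a derangement, but more to the point, \textbf{Step 2:} determine which cosets $Nh$ (for $h \in H$) consist \emph{entirely} of elements with fixed points: by Lemma~\ref{fh}, $Nh$ contains a derangement exactly when $f_h(u) = huh^{-1}u^{-1}$ is non-surjective on $N$. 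When $h = a x^\sigma$ with $\sigma$ nontrivial and $a$ suitably chosen (e.g. $a = 1$, $\sigma$ the full Frobenius), $f_h$ has kernel $\F_p \ne 0$ hence is non-surjective; but when $\sigma = \mathrm{id}$ and $a \ne 1$, $f_h(u) = (a-1)u$ \emph{is} surjective, so those cosets $Nh$ are derangement-free. \textbf{Step 3:} Conclude that $\Der_G$ is contained in $N \cup \bigcup_{\sigma \ne \mathrm{id}} (\text{cosets with that } \sigma\text{-part})$, which is a proper subset of $G$ missing all the cosets $Na$ with $a \in \F_{p^e}^\times \setminus\{1\}$; then verify that the subgroup generated by $\Der_G$ — which is $\langle N, \{h \in H : h \text{ centralizes some nonidentity element of } N\}\rangle$ — is proper, because every $h \in H$ that centralizes a nonzero element of $\F_{p^e}$ must have its $\GmL_1$-part fixing a nonzero vector, and such elements generate a proper subgroup of $H$ (they cannot produce the full multiplicative group $\F_{p^e}^\times$ acting freely). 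This last point is where I would invoke the condition $e \ge 2$: for $e = 1$ there is no Frobenius part and $G$ is Frobenius, but for $e \ge 2$ the Galois-fixed-point structure genuinely cuts things down.

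\textbf{The main obstacle} I anticipate is Step 3 — pinning down exactly which $h \in H$ centralize a nonzero element of $N$ and checking that those $h$, together with $N$, generate a proper subgroup. Concretely, $h = a x^\sigma$ fixes a nonzero $u \in \F_{p^e}$ iff $a u^\sigma = u$, i.e. $a = u^{1-\sigma}$; as $u$ ranges over $\F_{p^e}^\times$ and $\sigma$ over the Galois group, the set of such $a$ (for fixed $\sigma \ne \mathrm{id}$) is the image of the norm-like map $u \mapsto u^{1-\sigma}$, whose kernel is $\F_{p^{\gcd}}^\times$ and whose image therefore has index $p^{\gcd} - 1 > 1$ in $\F_{p^e}^\times$. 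One must check these images, over all $\sigma$, do not cover all of $\F_{p^e}^\times$ — or more robustly, that the group they generate together with $N$ omits, say, some coset $Na$. A clean sufficient observation: the map $G \to H/[\text{derangement-generated part of }H] $ shows that if the derangement-generated subgroup were all of $G$ it would surject onto $\F_{p^e}^\times$, forcing every $a \in \F_{p^e}^\times$ to be a product of elements of the form $u^{1-\sigma}$; but taking the norm $\F_{p^e}^\times \to \F_p^\times$ kills every $u^{1-\sigma}$ (since $\mathrm{N}(u^{1-\sigma}) = \mathrm{N}(u)/\mathrm{N}(u) = 1$, using that $\sigma$ commutes with the norm), so the norm of $a$ would have to be $1$ for \emph{all} $a \in \F_{p^e}^\times$ — false, since the norm is surjective onto $\F_p^\times$ and $p > 2$ means $\F_p^\times \ne \{1\}$. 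This norm argument should make Step 3 short and is, I expect, the intended punchline; the rest is bookkeeping with the structure of $\AGmL_1(p^e)$.
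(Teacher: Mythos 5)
Your proposal is correct and follows essentially the same route as the paper: reduce connectivity to whether $\Der_G$ generates $G$, use Lemma~\ref{fh} (equivalently the theorem on two-point stabilizers) to see that the linear parts of elements lying in cosets containing derangements are of the form $u^{1-\sigma}$ (the paper writes them as $(p-1)$-st powers, the same norm-one subgroup), and conclude the generated subgroup is proper. Your norm-map homomorphism $t_b\,a x^\sigma \mapsto \mathrm{N}(a)$ cleanly justifies the final non-generation claim that the paper simply asserts, and it is exactly where the hypothesis $p>2$ enters.
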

\begin{proof}
  Let $N$ be the regular normal subgroup of $\AGmL_1(p^e)$, consisting of
  the translations of the form $x\mapsto x+b$ with
  $b \in \mathbb{F}_{p^e}$). The two-point stabilizers of $\AGmL_1(p^e)$
  all have order $e$ and are generated by transformations  of the form
  $x\mapsto a^{(p-1)}x^p+b$ where $a,b \in \mathbb{F}_q$ and $a\neq 0$.
   These permutations do not generate all of $\AGmL_1(p^e)$.
\end{proof}

\section{Non-canonical Cocliques that are cosets of subgroups}
\label{sec:subgroups}

In this section we describe examples of groups that have a connected derangement graph, but also have noncanonical cocliques that are of maximum size.  These come from considering
non-canonical cocliques that are subgroups in $2$-transitive finite permutation groups $G$ with a regular normal subgroup $N$.

Since any coclique in $\Gamma_G$ must be a transversal of $N$, any
subgroup that is a non-canonical coclique must be complementary
to $N$, but not conjugate to $\stab$.  The $G$-conjugacy classes of
subgroups that are complementary to $N$, are classified by the first
cohomology group $H^1(\stab,N)$, where $N$ is viewed as an $\stab$-module by
conjugation. The trivial element of $H^1(\stab,N)$ corresponds to the
$G$-conjugacy class of $\stab$ and, if $H^1(\stab,N)$ is not trivial, each
nontrivial element corresponds to a $G$-conjugacy class of {\it
  nonstandard complements}, by which we mean subgroups complementary
to $N$, but not $G$-conjugate to $\stab$.

The following is a necessary and sufficient condition for a
nonstandard complement to be a maximum coclique in $\Gamma_G$.

\begin{lemma}\label{conj_condition} 
  A complement $K$ to $N$ in $G=N\stab$ is a coclique in $\Gamma_G$ if and
  only if every element of $K$ is $G$-conjugate to an element of
  $\stab$.
\end{lemma}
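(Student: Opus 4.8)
The plan is to exploit the structural facts about $2$-transitive groups with a regular normal subgroup that were assembled earlier in Section~\ref{sec:regNorm}, in particular the observation that a subset $S$ of $G$ is a coclique in $\Gamma_G$ if and only if $st^{-1}$ has a fixed point for every $s,t\in S$, together with the fact that an element of $G$ has a fixed point precisely when it is $G$-conjugate to an element of $\stab$ (this is just the statement that $\stab$ meets every conjugacy class of elements with a fixed point, which holds because $G$ is transitive). Since $K$ is a complement to $N$ it is automatically a transversal of $N$ in $G$, so $|K|=\abs{G}/n$ and $K$ is a maximum coclique as soon as it is a coclique at all; hence it suffices to prove the stated equivalence for cocliques.

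First I would prove the easy direction. Suppose $K$ is a coclique in $\Gamma_G$. Since $1\in K$, for every $k\in K$ the element $k=k\cdot 1^{-1}$ must be non-adjacent to $1$ in $\Gamma_G$, i.e.\ $k$ is not a derangement, so $k$ has a fixed point, and therefore $k$ is $G$-conjugate to an element of $\stab$. For the converse, suppose every element of $K$ is $G$-conjugate to an element of $\stab$; I must show that $k_1k_2^{-1}$ has a fixed point for all $k_1,k_2\in K$. The point is that $k_1k_2^{-1}\in K$, because $K$ is a subgroup; so by hypothesis $k_1k_2^{-1}$ is $G$-conjugate to an element of $\stab$, hence has a fixed point, hence $k_1$ and $k_2$ are non-adjacent. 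Thus $K$ is a coclique.

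The only genuinely substantive ingredient is the elementary equivalence ``$g$ has a fixed point in the action on $X$ $\iff$ $g$ is $G$-conjugate into $\stab$'', which was already used repeatedly in Section~\ref{sec:regNorm} (e.g.\ in the discussion preceding Lemma~\ref{lem:conj}), so no new obstacle arises; the proof is essentially a one-line application of closure of $K$ under the group operation to the coclique criterion. The main thing to be careful about is to state clearly that it is the subgroup hypothesis on $K$ (used via $k_1k_2^{-1}\in K$ and via $1\in K$) that makes the pointwise condition ``$k$ conjugate into $\stab$'' equivalent to the pairwise condition defining a coclique; for an arbitrary transversal of $N$ these two conditions are genuinely different, which is exactly why nonstandard complements are the natural source of noncanonical cocliques discussed in the remainder of the section.
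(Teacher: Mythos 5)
Your proposal is correct and follows essentially the same argument as the paper: the forward direction uses $1\in K$ to see every element of $K$ has a fixed point (hence lies in a point stabilizer, hence is conjugate into $\stab$), and the converse uses closure of the subgroup $K$ under $k_1k_2^{-1}$ together with the equivalence between having a fixed point and being $G$-conjugate into $\stab$. No gaps; your added remarks on transversality and why the subgroup hypothesis matters are consistent with the surrounding discussion in the paper.
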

\begin{proof} 
  Assume that $K$ is a complement to $N$ that is a coclique in $\Gamma_G$. Since
  $1\in K$, each element of $K$ must have a fixed point (as it
  intersects with the identity element). Thus any element of $K$ lies in
  a point stabilizer and is $G$-conjugate to an element of
  $\stab$.

  Conversely, if each element of a subgroup $K$ is $G$-conjugate to an
  element of $\stab$, then every element has a fixed point. So for any
  $h,k \in K$, the element $hk^{-1} \in K$ has a fixed point which
  implies that $K$ is a coclique.
\end{proof}

Using the notation of the previous proof, let $g \in K$ and let $g_p$
be its $p$-part. It follows from the injectivity of the restriction of
$H^1(\langle g \rangle,N) \rightarrow H^1(\langle g_p\rangle,N)$
(see~\cite[Ch.XII, Theorem 10.1]{MR1731415}) that we may replace the condition
in Lemma~\ref{conj_condition} that every element of $K$ be
$G$-conjugate to an element of $H$, by the same condition on
$p$-elements only.

\begin{theorem}
 For $e\ge 2$, the group $\ASL_2(2^e)$ of affine transformations of
  $X=\mathbb{F}_{2^e}^2$ does not have the strict-EKR property.
\end{theorem}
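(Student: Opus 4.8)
The plan is to exhibit a maximum coclique of $\Gamma_G$, where $G=\ASL_2(2^e)$, that is a \emph{nonstandard complement} to the translation subgroup and is therefore not a canonical coclique. Write $q=2^e$, so the degree is $n=q^2$; the regular normal subgroup $N$ is the group of translations, which as an $\SL_2(q)$-module is the natural module $\mathbb{F}_q^2$, and a point stabilizer is $H\cong\SL_2(q)$. By the discussion preceding Lemma~\ref{conj_condition}, the $G$-conjugacy classes of complements to $N$ in $G$ are parametrized by $H^1(H,N)$, and a nonstandard complement exists precisely when $H^1(\SL_2(q),N)\neq 0$. Thus the first step is to invoke the known cohomological fact that $H^1(\SL_2(2^e),\mathbb{F}_{2^e}^2)\neq 0$ for all $e\geq 2$; this is exactly where the hypothesis $e\geq 2$ is used (when $e=1$ we have $\ASL_2(2)\cong\Sym(4)$, which does have the strict-EKR property). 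This cohomological input is the only non-elementary ingredient.

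Next I would show that \emph{every} complement $K$ to $N$ in $G$ is automatically a coclique of $\Gamma_G$; in particular the nonstandard one is. Write $K=\{\delta(h)\,h : h\in H\}$ for a $1$-cocycle $\delta\colon H\to N$, so that $\delta(h_1h_2)=\delta(h_1)+h_1\cdot\delta(h_2)$, where $\cdot$ denotes the natural action of $\SL_2(q)$ on $\mathbb{F}_q^2$. By Lemma~\ref{conj_condition}, together with the reduction to $p$-elements noted immediately after it, it suffices to check that every $2$-element of $K$ is $G$-conjugate into $H$, equivalently that it has a fixed point on $X$. Since $K$ is a complement, $K\cong H\cong\SL_2(q)$; as $q$ is even, a Sylow $2$-subgroup of $\SL_2(q)$ (the group of upper unitriangular matrices) is elementary abelian, so every $2$-element of $K$ has the form $\delta(u)\,u$ with $u$ either the identity or an involution of $\SL_2(q)$. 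An involution $u$ of $\SL_2(q)$ is a nontrivial unipotent element (as $q$ is even), so $u=I+\nu$ with $\nu\neq 0$ and $\nu^2=0$; hence $I-u=\nu$ has rank $1$ and $\image(I-u)=\ker(I-u)$. Under the identification $X\cong\mathbb{F}_q^2$, the transformation $\delta(u)\,u$ has a fixed point if and only if $(I-u)x=\delta(u)$ has a solution, i.e.\ iff $\delta(u)\in\image(I-u)$. But the cocycle identity gives $0=\delta(u^2)=\delta(u)+u\cdot\delta(u)=(I-u)\delta(u)$ (we are in characteristic $2$), so $\delta(u)\in\ker(I-u)=\image(I-u)$, as required. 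Hence $K$ is a coclique.

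Finally, $K$ has order $\abs{G}/\abs{N}=\abs{G}/n$, so by Theorem~\ref{thm:2transEKR} it is a maximum coclique. It contains the identity and is not $G$-conjugate to $H$, so it is neither a point stabilizer nor a coset $S_{i,j}$ with $i\neq j$ (the latter does not contain the identity); that is, $K$ is not a canonical coclique. Therefore $\ASL_2(2^e)$ possesses a non-canonical maximum intersecting set and does not have the strict-EKR property.

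The only real obstacle is the cohomological input $H^1(\SL_2(2^e),\mathbb{F}_{2^e}^2)\neq 0$ for $e\geq 2$; granting it, the rest is the short characteristic-$2$ computation above, whose essential point is that the cocycle relation forces $\delta(u)$ into the fixed space of each involution $u\in\SL_2(q)$, so that $\delta(u)\,u$ always has a fixed point on $X$. Alternatively, one could keep the argument self-contained by exhibiting an explicit non-coboundary crossed homomorphism $\SL_2(q)\to\mathbb{F}_q^2$, but appealing to the known cohomology is cleaner.
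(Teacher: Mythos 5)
Your proposal is correct and follows essentially the same route as the paper: both rely on $H^1(\SL_2(2^e),\mathbb{F}_{2^e}^2)\neq 0$ to produce a nonstandard complement $K$, and both reduce to showing that the $2$-elements of $K$ have fixed points via the identity $\ker(t-1)=\image(t-1)$ for an involution $t\in\SL_2(2^e)$ on the natural module. The only (cosmetic) differences are that you phrase the involution condition through the cocycle relation and handle the remaining elements by the $p$-element reduction stated after Lemma~\ref{conj_condition}, whereas the paper writes involutions of $K$ as $ut$ with $u\in C_N(t)$ and appeals to Schur--Zassenhaus for the odd-order elements.
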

\begin{proof}
Let $G=\ASL_2(2^e)$ be the group of affine
  transformations of $X=\mathbb{F}_{2^e}^2$ generated by the linear group
  $H=\SL_2(2^e)$ (the stabilizer of the zero vector) and the
  group $N=\mathbb{F}_{2^e}^2$ of translations, where $uh:x\mapsto hx+u$,
  for $x\in X$, $h\in H$ and $u\in N$.  It is well known that
  $H^1(H,N)\cong \mathbb{F}_{2^e}$ when $e\ge 2$~\cite[Lemma 14.7]{MR166272}.

  Since $H^1(H,N)$ is not trivial, this group has a non-standard complement, say $K$. The group $K$ is 
  not conjugate to $H$, but it is isomorphic to it. This implies that every element of $K$ is either an 
  involution or an element of odd order. Moreover, there is a single $K$-conjugacy class of involutions 
  and each involution in $K$ has the form $ut$, where $t\in H$ and $u\in C_N(t)$.

  If we regard $N$ as a $\mathbb{F}_{2^e}$-vector space and $t \in H$
  as a linear map, then $C_N(t)=\ker(t-1)$, and a simple calculation
  shows that $\ker(t-1)=\image(t-1)$.  It follows that for any $u\in N$
  there exists
  $m\in N$ such that $u=t^{-1}mtm^{-1}$, so $ut=tu=mtm^{-1}$ is
  conjugate to $t\in H$. As every odd order element of $K$ is
  conjugate to an element of $H$, the Schur-Zassenhaus Theorem,
  Lemma~\ref{conj_condition} shows that $K$ (and its cosets) are
  non-canonical cocliques in $\Gamma_G$.  
\end{proof}

\begin{example}
  For an explicit example, let $e=2$ and $\alpha$ be a primitive element
  of $\mathbb{F}_4$.  We can think of $\ASL_2(4)$ as the subgroup of
  $\SL_3(4)$ consisting of matrices of the block form
  \[
    \begin{bmatrix} A & v\\0&1   \end{bmatrix} 
  \]
where  $A\in \SL_2(4)$ and $v\in  \mathbb{F}_4^2$.

Consider the elements
  \begin{equation*}
  t=\begin{bmatrix} 1&0&0\\1&1&0\\0&0&1 
  \end{bmatrix}, \qquad
  u=\begin{bmatrix} 1&0&0\\0&1&1\\0&0&1 
  \end{bmatrix}
 \qquad  
  s=\begin{bmatrix} 0&1&0\\1&\alpha&0\\0&0&1 
  \end{bmatrix}    
  \end{equation*}
of orders $2$, $2$ and $5$ respectively.

The standard complement $H$ is generated by the elements $t$ and $s$,
while $tu$ and $s$ generate a nonstandard complement. It is
interesting to note that this non-standard complement has an orbit of size $6$
in $\mathbb{F}_{4}^2$ which is a {\it maximal arc of degree 2}.
(An arc of degree 2 is a subset in which no three points are collinear, and
in $\mathbb{F}_{4}^2$ such a subset can have at most $6$ points.)
\end{example}

Many other examples of non-canonical cocliques arising from nonstandard
complements can be found. However, it is not always the case that a
nonstandard complement will yield a non-canonical coclique in the
derangement graph, as it may fail to satisfy the hypotheses of
Lemma~\ref{conj_condition}, as in the following example.

\begin{example} 
Let $G=\AGL_3(2)=NH$, with $H=\GL_3(2)$ and
  $N=\mathbb{F}_{2}^3$, acting on $X=\mathbb{F}_{2}^3$ by affine transformations
  $uh:x\mapsto hx+u$, for $x\in X$, $h\in H$ and $u\in N$.  We can
  view $G$ as the subgroup of $GL_4(2)$ consisting of matrices of the
  following block form
\[
    \begin{bmatrix} A & v\\0&1  \end{bmatrix} 
\]
where  $A\in\GL_3(2)$ and $v\in \mathbb{F}_2^3$.

Consider the elements
  \begin{equation*}
  a=\begin{bmatrix} 1&1&0&0\\0&1&0&0\\0&0&1&0\\0&0&0&1 
  \end{bmatrix}, \qquad
  u=\begin{bmatrix} 1&0&0&1\\0&1&0&0\\0&0&1&1\\0&0&0&1 
  \end{bmatrix},
\qquad  
  s=\begin{bmatrix} 0&0&1&0\\1&0&1&0\\0&1&0&0\\0&0&0&1 
  \end{bmatrix}    
  \end{equation*}
of orders $2$, $2$ and $7$ respectively.

It is well known and easy to show by direct calculation that
$H^1(H,N)\cong\mathbb{F}_{2}$.
The standard complement $H$ is generated by the elements $a$ and $s$,
while $au$ and $s$ generate a nonstandard complement.  If $au$ were
$G$-conjugate to any element of $H$, it would be conjugate under $N$
to some element of $H$, as $G=NH$, and that element would have to have
the same image as $an$ in $G/N$. Thus $au$ would be conjugate to
$a$. However they are not conjugate in $G$, as $a-1$ and $au-1$ have
different ranks. So there are no subgroups that are also nonstandard cocliques.
This particular group can be shown directly to have the
strict-EKR property using the method described in~\cite{MR3415003}.
\end{example}

\section{Inner Distributions}
\label{sec:consequences}

We have proven that in any 2-transitive group  the characteristic vector of any maximum intersecting set is a linear combination of the characteristic vectors of the the canonical cocliques. For some groups, this fact has been used to show that the group has the strict-EKR property~\cite{GoMe,MR3780424, KaPa}. For other 2-transitive groups, that do not have the strict-EKR property, this fact has been used to characterize all the of the maximum intersecting sets~\cite{KaPa2, MR3921038}. 
It does not seem feasible to characterize the maximum intersecting sets for a general 2-transitive group, but 
in this section, we will prove that the EKR-module property does give us some extra information about the structure of the maximum intersecting sets. The number of pairs of elements $(g,h)$ in a set that have $hg^{-1}$ in a given conjugacy class is called the {\it inner distribution} of the set. We will show for a 2-transitive group, every maximum intersecting set has the same inner distribution. This gives information about the pair-wise intersection of elements within an intersecting set. In fact, this can been seen as a refinement of Lemma~\ref{conj_condition}. In Lemma~\ref{conj_condition} it was shown that if a complement is a coclique, then every element is conjugate to an element in $\stab$. The result on the inner distribution that we will prove in this section implies that
if a subgroup is a maximum coclique, then it has the same number of elements
in each conjugacy class of $G$ as $\stab$ has. 

To do this, we will consider the {\it conjugacy
  class scheme} on the group $G$. This is the association scheme that
has the elements of $G$ as its vertices and one class for each
conjugacy class of $G$. Two elements $g,h \in G$ are adjacent in a
class if $hg^{-1}$ is in the corresponding conjugacy class. The
matrices in this association scheme are indexed by the conjugacy
classes, and  denoted by $A_c$. The idempotents are indexed
by the irreducible representations of $G$, and denoted by
$E_\phi$.

Let $S$ be any maximum intersecting set in $G$. Let $v_S$ denote the
characteristic vector of $S$. Then the {\it inner distribution} of
$S$ is the sequence
\[
\left( \frac{v_S^T A_c v_S}{|S|} \right)_c
\]
taken over the conjugacy classes $c$ of $G$.
This gives a count of how many pairs of elements in $S$ are
$i$-related in the association scheme. The {\it dual distribution} is
defined to be the sequence 
\[
\left( \frac{v_S^T E_{\phi} v_S}{|S|} \right)_\phi
\]
taken over the irreducible representations $\phi$ of $G$.

Lemma~\ref{lem:module} implies for any maximum intersecting set
$S$ in $G$ that $v_S^T E_{\phi} v_S=0$, unless $\phi = 1_G$ or $\phi =
\psi_G$. From the comments following Lemma~\ref{lem:module}, we have
\[
\frac{v_S^T E_{1_G} v_S}{|S|} = \frac{|S|^2}{|G||S|} = \frac{1}{n}
\]
and
\[
\frac{v_S^T E_{\psi_G} v_S}{|S|} = 1 -\frac{1}{n}.
\]
Thus all maximal intersecting sets have the same dual distribution.
It is known (see~\cite[Theorem 3.5.1]{EKRbook}) that in any
association scheme the following equation holds
\[
\sum_{c} \frac{v_S^T A_c v_S}{|S|}A_c 
= \sum_{\phi} \frac{v_S^T  E_{\phi} v_S}{|S|} E_{\phi}.
\]
In particular, for any maximum intersecting set in $G$
\[
\sum_{c} \frac{v_S^T A_c v_S}{|S|}A_c 
= \frac{1}{n} E_{1_G} + \left( 1 -\frac{1}{n}\right) E_{\psi_G}.
\]
In the conjugacy class association scheme the sets $\{A_c \}$ and $\{ E_{\phi}\}$ are both
bases and the matrix of eigenvalues for the association scheme
is a change-of-basis matrix. The above equation implies that the inner distribution
for $S$ can be found by multiplying the dual distribution by the inverse
of the matrix of eigenvalues. In particular, we obtain the following result.

\begin{lemma}
Let $G$ be a 2-transitive group and let $S$ be any maximum
intersecting set in $G$. Then $S$ has the same inner distribution as
the stabilizer of a point. \qed
\end{lemma}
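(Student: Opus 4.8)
The plan is to exploit the fact, already established in Lemma~\ref{lem:module}, that the dual distribution of any maximum intersecting set $S$ agrees with that of the point stabilizer $\stab$: both have $v_S^TE_\phi v_S/|S|$ equal to $1/n$ when $\phi=1_G$, equal to $1-1/n$ when $\phi=\psi_G$, and equal to $0$ for every other irreducible $\phi$. This computation is carried out in the paragraphs immediately preceding the statement, and in particular the values do not depend on $S$ at all, only on the fact that $S$ is a maximum intersecting set (so that $|S|=|G|/n$ and $v_S$ lies in the $\chi_G$-module). The same reasoning applies verbatim to $\stab$, which is itself a maximum intersecting set.

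The key step is then to invoke the standard duality in a commutative association scheme. In the conjugacy class scheme of $G$, the idempotents $\{E_\phi\}_\phi$ and the Schur-idempotent adjacency matrices $\{A_c\}_c$ are two bases of the Bose--Mesner algebra, related by the first and second eigenvalue matrices $P$ and $Q=|G|P^{-1}$ (or whichever normalization~\cite[Chapter 3]{EKRbook} uses). The inner distribution $\left(v_S^TA_cv_S/|S|\right)_c$ and the dual distribution $\left(v_S^TE_\phi v_S/|S|\right)_\phi$ are related by precisely this change of basis; this is exactly the content of the displayed identity $\sum_c \frac{v_S^TA_cv_S}{|S|}A_c=\sum_\phi\frac{v_S^TE_\phi v_S}{|S|}E_\phi$ quoted from~\cite[Theorem 3.5.1]{EKRbook} together with the uniqueness of coordinates in a basis. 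Since the dual distribution of $S$ equals the dual distribution of $\stab$, applying the (invertible) eigenvalue matrix to both yields that the inner distribution of $S$ equals the inner distribution of $\stab$.

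So the proof is essentially a one-line consequence of Lemma~\ref{lem:module} plus linear algebra: the map sending a vector's dual distribution to its inner distribution is a fixed linear bijection, and $S$ and $\stab$ have the same input, hence the same output. I do not anticipate a genuine obstacle here; the only thing to be careful about is bookkeeping, namely making sure the normalization conventions for $P$, $Q$, and the inner/dual distributions are consistent with~\cite{EKRbook} so that the displayed equation genuinely encodes the change of basis, and noting explicitly that $\stab$ is a maximum intersecting set (by Theorem~\ref{thm:2transEKR}, $|\stab|=|G|/n$) so that the same formulas apply to it. Given all of this is already assembled in the surrounding text, the argument is short.
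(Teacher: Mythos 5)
Your proposal is correct and follows essentially the same route as the paper: the EKR-module property pins down the dual distribution of any maximum intersecting set (values $1/n$, $1-1/n$, and $0$ elsewhere), and since the eigenvalue matrix of the conjugacy class scheme is an invertible change of basis between $\{E_\phi\}$ and $\{A_c\}$, the inner distribution is determined by the dual distribution, hence agrees with that of the point stabilizer, which is itself a maximum intersecting set. Your explicit remark about checking normalization conventions and about $\stab$ being maximum is careful bookkeeping the paper leaves implicit, but there is no substantive difference.
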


\section{Further Work}
\label{sec:conc}

There have been many papers looking at specific groups to determine
the structure of the maximum cocliques in the derangement
graph. Theorem~\ref{thm:main} gives a strong characterization of the
maximum cocliques in any 2-transitive groups. We end with an open
problem and a direction for further work.

Our only examples of groups that have non-canonical maximum cocliques
in their derangement graphs, that are neither subgroups nor cosets,
have the property that the derangement graphs are not connected. This
leads to our remaining question.

%connected?
\begin{question}
  Are there 2-transitive groups $G$, with connected derangement graphs, that have a
  maximum coclique that is neither a subgroup nor a coset of a subgroup?
\end{question}

Finally, in this paper we only consider 2-transitive groups. The
definition of the EKR-module property can be considered for any group,
with the key difference being that, in general, the permutation module
is not the sum of the trivial module and a single irreducible
module. This situation will be more complicated, as there are
transitive groups which satisfy neither the EKR property, nor the
EKR-module property, nor the strict-EKR property. The first groups to
consider are the rank 3 groups.

\section*{Acknowledgment}
We thank the referee, whose numerous helpful suggestions have improved
the exposition of this paper.

\thebibliography{10}

\bibitem{MR1429238}
Rudolf Ahlswede and Levon~H. Khachatrian.
\newblock The complete intersection theorem for systems of finite sets.
\newblock {\em European J. Combin.}, 18(2):125--136, 1997.

\bibitem{MR3415003} 
  Bahman Ahmadi and Karen Meagher, 
\newblock{The  {E}rd\H{o}s-{K}o-{R}ado property for some 2-transitive groups},
\newblock {\em Ann. Comb.}, 19(4):621--640, 2015.

\bibitem{MR3286446} 
Bahman Ahmadi and Karen Meagher, 
\newblock{The Erd\H{o}s-{K}o-{R}ado property for some permutation groups},
\newblock {\em Australas. J. Combin.}, 61:23--41, 2015.

\bibitem{Ba}L. Babai, Spectra of Cayley Graphs, 
\newblock {\em J. of Combin. Theory B}.  2:180--189, 1979. 

\bibitem{MR2320597}
Peter Borg, {Intersecting systems of signed sets}, 
\newblock {\em Electron. J. Combin.}, 14:1 (2007) Research Paper 41

\bibitem{Bu} W. Burnside, 
\textit{{T}heory of {G}roups of {F}inite {O}rder}, 
Cambridge University Press, Cambridge, 1897.

\bibitem{Ca}P.~J.~Cameron, 
\textit{Permutation Groups}, 
London Mathematical Society Student Texts 45, 1999. 

\bibitem{MR1731415}
Henri Cartan and Samuel Eilenberg,
\textit{{H}omological {A}lgebra}, Princeton University Press,
Princeton, 1956.

\bibitem{MR626813}
Persi Diaconis and Mehrdad Shahshahani.
\newblock Generating a random permutation with random transpositions.
\newblock {\em Z. Wahrsch. Verw. Gebiete}, 57(2):159--179, 1981.

\bibitem{DM}J.~D.~Dixon, B.~Mortimer, 
\textit{Permutation groups}, 
Graduate Texts in Mathematics, Springer, New York, 1996.

\bibitem{MR0140419}
P.~Erd{\H{o}}s, Chao Ko, and R.~Rado.
\newblock Intersection theorems for systems of finite sets.
\newblock {\em Quart. J. Math. Oxford Ser. (2)}, 12:313--320, 1961.

\bibitem{MR166272}
  David Foulser.  
  \newblock {The flag-transitive collineation groups of the finite {D}esarguesian affine planes}. 
 \newblock {\em Canadian  J. Math.} 16:443--472, 1964.

\bibitem{MR867648}
P.~Frankl and R.~M. Wilson.
\newblock The {E}rd{\H o}s-{K}o-{R}ado theorem for vector spaces.
\newblock {\em J. Combin. Theory Ser. A}, 43(2):228--236, 1986.

\bibitem{GoMe}
C.~Godsil, K.~Meagher.
A new proof of the Erd\H{o}s-Ko-Rado theorem for intersecting families of permutations,
\newblock {\em European J. of Combin.} 30:404--414, 2009.

\bibitem{MR3646689}
Chris Godsil and Karen Meagher.
{An algebraic proof of the {E}rd\H{o}s-{K}o-{R}ado theorem for intersecting families of perfect matchings},
\newblock {\em Ars Math. Contemp.} 12(2):205--217, 2017.

\bibitem{EKRbook}
C.~Godsil, K.~Meagher.
\newblock \textit{{E}rd\H{o}s-{K}o-{R}ado {T}heorems: {A}lgebraic {A}pproaches},
\newblock  Cambridge University Press, 2015.

\bibitem{MR1829620}
Chris Godsil and Gordon Royle.
\newblock {\em Algebraic {G}raph {T}heory}, volume 207 of {\em {G}raduate {T}exts
  in Mathematics}.
\newblock Springer-Verlag, New York, 2001.
 
\bibitem{GT}
R.~Guralnick, P.~H.~Tiep.
Cross characteristic representations of even characteristic symplectic groups, 
\newblock {\em Trans. Amer. Math. Soc. } 356:4969--5023, 2004.

\bibitem{MR2270898}
I.~Martin Isaacs.
\newblock {\em {C}haracter {T}heory of {F}inite {G}roups}
\newblock {AMS Chelsea Publishing, Providence, RI}, 2006.
	
\bibitem{MR2423345}
Cheng~Yeaw Ku and David Renshaw.
\newblock Erd{\H o}s-{K}o-{R}ado theorems for permutations and set partitions.
\newblock {\em J. Combin. Theory Ser. A}, 115(6):1008--1020, 2008.

\bibitem {MR3780424}
 Ling Long, Rafael Plaza, Peter Sin and Qing Xiang.
\newblock Characterization of intersecting families of maximum size in {$PSL(2, q)$}  
  \newblock {\em J.. Combin. Theory Ser. A}, 157:461--499, 2018.

\bibitem{MR3923591}
  Karen Meagher.
  \newblock {An {E}rd\H{o}s-{K}o-{R}ado theorem for the group {${\rm PSU}(3,q)$}},
    \newblock {\em Des. Codes Cryptogr.}, 87(4):717--744, 2019.

\bibitem{KaPa} K.~Meagher, P.~Spiga.
An Erd\H{o}s-Ko-Rado theorem for the derangement graph of $\mathrm{PGL}(2,q)$ acting on the
   projective line, 
   \newblock {\em J. Comb. Theory Series A} 118: 532--544, 2011.

 \bibitem{KaPa2}
   K.~Meagher, P.~Spiga.
   An Erd\H{o}s-Ko-Rado theorem for the derangement graph of $\mathrm{PGL}_3(q)$ acting on the projective
   plane, 
   \newblock {\em SIAM J. Discrete Math. } 28:918--941, 2011.

\bibitem{MR3474795}
Karen Meagher, Pablo Spiga and Pham Huu Tiep.
\newblock {An {E}rd\H{o}s-{K}o-{R}ado theorem for finite 2-transitive groups},
 \newblock {\em European J. Combin.}, 55:100--118, 2016.

\bibitem{MR657052}
Aeryung Moon.
\newblock An analogue of the {E}rd{\H o}s-{K}o-{R}ado theorem for the {H}amming
  schemes {$H(n,\,q)$}.
\newblock {\em J. Combin. Theory Ser. A}, 32(3):386--390, 1982.

\bibitem{OEIS}
 OEIS Foundation Inc. (2019), The On-Line Encyclopedia of Integer
 Sequences, https//oeis.org/A003221.
 
\bibitem{MR0335618}
W. A. Simpson, J. S. Frame.
\newblock {The character tables for {${\rm SL}(3,\,q)$}, {${\rm
              SU}(3,\,q^{2})$}, {${\rm PSL}(3,\,q)$}, {${\rm
              PSU}(3,\,q^{2})$}}, 
\newblock {\em Canad. J. Math.} 25:486--494, 1973.

\bibitem{MR3921038}
  Pablo Spiga. 
  \newblock {The {E}rd\H{o}s-{K}o-{R}ado theorem for the
    derangement graph of the projective general linear group acting on
    the projective space}, 
    \newblock {\em J. Combin. Theory Ser. A}, 166:59--90, 2019.

\bibitem{MR0771733}
Richard~M. Wilson.
\newblock The exact bound in the {E}rd{\H o}s-{K}o-{R}ado theorem.
\newblock {\em Combinatorica}, 4(2-3):247--257, 1984.

\end{document}